%%%%%%%%%%%%%%%%%%%%%%%%%%%%%%%%%%%%%%%%%%%%%%%%%%%%%%%%%%%%%%%%%%%%%%%%%%%%
%% Trim Size: 9.75in x 6.5in
%% Text Area: 8in (include Runningheads) x 5in
%% ws-ijnt.tex   :   10-10-2007
%% Tex file to use with ws-ijnt.cls written in Latex2E.
%% The content, structure, format and layout of this style file is the
%% property of World Scientific Publishing Co. Pte. Ltd.
%% Copyright 1995, 2002 by World Scientific Publishing Co.
%% All rights are reserved.
%%%%%%%%%%%%%%%%%%%%%%%%%%%%%%%%%%%%%%%%%%%%%%%%%%%%%%%%%%%%%%%%%%%%%%%%%%%%
%%

\documentclass{ws-ijnt}

\usepackage{tikz}

\begin{document}

\markboth{Q. Yang, H. Liu, G. Tang}{A shifted mahler measure identity for Boyd's family}

%%%%%%%%%%%%%%%%%%%%% Publisher's Area please ignore %%%%%%%%%%%%%%%
%
\catchline{}{}{}{}{}
%
%%%%%%%%%%%%%%%%%%%%%%%%%%%%%%%%%%%%%%%%%%%%%%%%%%%%%%%%%%%%%%%%%%%%

\title{A shifted Mahler measure identity for Boyd's family}

\author{Quanli Yang}

\address{ School of Mathematical Sciences, University of Chinese Academy of Sciences\\
	Beijing, 100049, P. R. China, \\
\email{yangquanli16@mails.ucas.ac.cn} }

\author{Hang Liu\footnote{Corresponding author}}

\address{College of Mathematics and Statistics, Shenzhen University\\
	 Shenzhen, Guangdong, 518060, P. R. China\\
liuhang@szu.edu.cn}

\author{Guoping Tang}

\address{ School of Mathematical Sciences, University of Chinese Academy of Sciences\\
	 Beijing, 100049, P. R. China \\
tanggp@ucas.ac.cn }

\maketitle

\begin{abstract}
Recently the second author and Qin numerically verified some Mahler measure identities of genus 2 and 3 polynomial families. In this paper, we use the elliptic regulator to prove an identity invoving shifted Mahler measure for Boyd's family.
\end{abstract}

\keywords{Mahler measure; genus 2 curves; elliptic curve; elliptic regulator.}

\ccode{Mathematics Subject Classification 2010:11R06, 11G05, 19F27}

\section{Introduction}	

The (logarithmic) Mahler measure of a non-zero rational function $P \in \mathbb{C}(x_1,\cdots,x_n)$ is defined by
\begin{align*}
	m(P):=\frac{1}{(2\pi i)^n}\int_{\mathbb{T}^n}\log|P(x_1,\cdots,x_n)|\frac{dx_1}{x_1}\cdots\frac{dx_n}{x_n}
\end{align*}
where the integration is taken over the unit torus $\mathbb{T}^n=\{(x_1,\cdots,x_n)\in \mathbb{C}^n:|x_1|=\cdots=|x_n|=1\}.$

For one-variable polynomials, by the famous Jensen's formula \cite[page 6]{Zudilin2020}, the Mahler measures depend only on the roots of the polynomials. For multivariate polynomials, there are many results that establish the relationship between special value of $L$-functions of arithmetic-geometric objects and these Mahler measures. In his seminal work, Deninger \cite{Deninger1997} related the Mahler measures to regulator integrals and hence found some relationship between the Mahler measures and special values of $L$-functions by means of Beilinson's conjectures.

Boyd \cite{Boyd1998} numerically studied the Mahler measures of families of polynomials like $A(x)y^2+B_k(x)y+C(x)$. For the Boyd's family $Q_k(x,y)=y^2+(x^4+kx^3+2kx^2+kx+1)y+x^4$ and $P_k(x,y)=(x+1)(y+1)(x+y)-kxy$, Bertin and Zudilin \cite{BZ} proved the following relation which was later reproved by Lalin and Wu \cite{Lalin2018} using the regulator theory
\begin{align*}
	m(Q_k)=\begin{cases}
		2m(P_{k}),&\quad 0 \leqslant k \leqslant 4,\\
		m(P_{k}), &\quad k \leqslant -1.
	\end{cases}
\end{align*}
All these families are reciprocal polynomials which are easier to deal with than non-reciprocal polynomials by a relatively standard procedure.

The second author and Qin \cite{Liu} generalized Boyd's method of constructing reciprocal polynomials, obtained more types of polynomials, and proposed many conjectural Mahler measure identities. For $Q_k(x-1,y), P_k(x,y)$ and $R_k(x,y)=x+\frac{1}{x}+y+\frac{1}{y}+(k-4)$, they numerical verified a relation between the Mahler measures of these polynomials.
While we are writing this article, we find Ringeling and Zudilin \cite{Ringeling2021} proved the relation using a diamond-free method. In this article, we prove it in Theorem \ref{theory} by the regulator theory which follows more closely on how the second author and Qin found this relation. We hope the two methods could complement each other.
\begin{theorem}\label{theory}
Let $P_k, Q_k, R_k$ be as above. We have
	\begin{align*}
	m(Q_k(x-1,y))=
	\begin{cases}
	m(R_k)\qquad &k\leqslant -1,\\
	\frac{1}{2}(m(P_k)+m(R_k))\qquad &k\geqslant17.
	\end{cases}
	\end{align*}
\end{theorem}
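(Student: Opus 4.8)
The plan is to realize each of the three Mahler measures as an elliptic regulator integral, to exploit the fact that the genus-$2$ curve $C_k\colon Q_k(x-1,y)=0$ has Jacobian isogenous to $E_{R_k}\times E_{P_k}$, and to decompose the regulator of the symbol $\{x,y\}$ along this splitting; the interval of $k$ will enter only through the homology class of the integration cycle. \textbf{Step 1: regulator formulas.} The leading coefficients in $y$ of $Q_k(x-1,y)$, of $R_k$ (after clearing denominators), and of $P_k$ are, up to monomials, $1$, $1$, and $x+1$, all of Mahler measure $0$; hence Jensen's formula and Deninger's argument give
\[
2\pi\, m(Q_k(x-1,y))=-\int_{\gamma}\eta(x,y),\qquad \eta(x,y)=\log|x|\,d\arg y-\log|y|\,d\arg x,
\]
where $\gamma\subset C_k$ is the union of the arcs over $\{|x|=1\}$ on which $|y|\ge1$, suitably oriented, and likewise $2\pi\, m(R_k)=-\int_{\delta_R}\eta$, $2\pi\, m(P_k)=-\int_{\delta_P}\eta$ on the elliptic curves $E_{R_k}\colon R_k=0$, $E_{P_k}\colon P_k=0$; one checks that for all but finitely many $k$ these curves are smooth of genus $2$, $1$, $1$.

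\textbf{Step 2: splitting the Jacobian.} The reciprocity $Q_k(1/x,1/y)=x^{-4}y^{-2}Q_k(x,y)$ becomes, after $x\mapsto x-1$, the involution $\tau\colon(x,y)\mapsto\bigl(\tfrac{x}{x-1},\tfrac1y\bigr)$ of $C_k$. With $\sigma$ the hyperelliptic involution (which fixes $x$) one has $\sigma\tau\ne\mathrm{id}$, so $\langle\sigma,\tau\rangle\cong(\mathbb{Z}/2)^2$, and $E_k^{+}:=C_k/\tau$, $E_k^{-}:=C_k/\sigma\tau$ are elliptic curves with $\mathrm{Jac}(C_k)\sim E_k^{+}\times E_k^{-}$. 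Passing to the $\langle\sigma,\tau\rangle$-invariant coordinate $S=(x-1)+\tfrac1{x-1}$ and to $z=y/(x-1)^2$, so that $C_k$ reads $z+z^{-1}+S^2+kS+2k-2=0$, one reaches the quartic models
\[
E_k^{+}\colon V^2=(S-2)(S+k-2)(S^2+kS+2k),\qquad E_k^{-}\colon W^2=(S+2)(S+k-2)(S^2+kS+2k),
\]
and comparing $j$-invariants as rational functions of $k$ (or producing the explicit birational maps found by the second author and Qin) identifies $\{E_k^{+},E_k^{-}\}$ with $\{E_{R_k},E_{P_k}\}$.

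\textbf{Step 3: splitting the symbol.} In $K_2$ one computes $\tau^*\{x,y\}=\{x/(x-1),1/y\}=-\{x,y\}+\{x-1,y\}$, so $\{x-1,y\}$ is $\tau$-invariant and $2\{x,y\}-\{x-1,y\}$ is $\tau$-anti-invariant; equivalently $\tau^*\eta(x,y)=-\eta(x,y)+\eta(x-1,y)$. Symmetrizing the cycle $\gamma$ over $\tau$ therefore rewrites $\int_\gamma\eta(x,y)$ as the sum of a regulator integral on $E_k^{+}$, built from $\{x-1,y\}$, and one on $E_k^{-}$, built from $2\{x,y\}-\{x-1,y\}$. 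Since the relevant part of $K_2$ of an elliptic curve has rank $1$ and the classes involved are tempered, the classes obtained on $E_{R_k}$ and $E_{P_k}$ are fixed rational multiples — pinned down by their tame symbols — of those computing $m(R_k)$ and $m(P_k)$. One thus lands on an identity $m(Q_k(x-1,y))=a_k\, m(R_k)+b_k\, m(P_k)$ in which $a_k,b_k\in\mathbb{Q}$ depend only on the homology classes of $\gamma$ and of its images in $E_k^{\pm}$.

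\textbf{Step 4: the integration cycle — the main obstacle.} What is left is to evaluate $a_k$ and $b_k$, i.e. to describe the connected components of $\gamma=\{(x,y)\in C_k:|x|=1,\ |y|\ge1\}$ and their classes in $H_1(E_k^{\pm})$ as $k$ varies, the same delicate bookkeeping that Boyd, Bertin, and Zudilin perform for $Q_k$ itself. I expect that for $k\le-1$ the part of $\gamma$ lying over $E_{P_k}$ is null-homologous, so $b_k=0$, while the part over $E_{R_k}$ is homologous to twice the standard regulator cycle, so $a_k=1$, giving $m(Q_k(x-1,y))=m(R_k)$; and that for $k\ge17$ each elliptic piece is met exactly once, so $a_k=b_k=\tfrac12$, giving $m(Q_k(x-1,y))=\tfrac12\bigl(m(P_k)+m(R_k)\bigr)$. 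The change of regime occurs where a real branch point of one of the quartics of Step 2 crosses the arc traced by $S(x)$ over $|x|=1$; this lies in the excluded interval, and showing that it happens precisely at $k=17$ — and that no further degeneration occurs for $k\ge17$ or $k\le-1$ — is the crux, the rest being a finite explicit computation.
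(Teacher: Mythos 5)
Your overall strategy is the same as the paper's: pass to the two elliptic quotients of the genus-2 curve by the non-hyperelliptic involutions, split $2\{x,y\}$ into the $\tau$-invariant part $\{x-1,y\}$ (which lives on the $P_k$-side) and the anti-invariant part $2\{x,y\}-\{x-1,y\}$ (which lives on the $R_k$-side), and reduce everything to homology bookkeeping of the Deninger cycle. Your quartic models and the expected outcome of the cycle analysis ($b_k=0$, $a_k=1$ for $k\leqslant -1$; $a_k=b_k=\tfrac12$ for $k\geqslant 17$) agree with what the paper proves. But two essential steps are missing or mis-argued. First, in Step 3 the identification of the pushed-forward classes with the classes computing $m(R_k)$ and $m(P_k)$ cannot be obtained from ``$K_2$ has rank $1$'' (not known here) nor from tame symbols (which do not determine the proportionality constant). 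The unconditional route, which the paper takes, is to compute the diamond operations $(f)\diamond(g)$ in $\mathbb{Z}[E(\mathbb{C})]^-$ of both classes and compare via the elliptic-dilogarithm formula for the regulator. This itself requires a nontrivial intermediate step: the anti-invariant symbol is not a symbol of invariant functions, so it cannot be pushed forward directly; the paper's Lemma 3.5 introduces auxiliary functions $a,b$ on the quotient with $a(x+1)+by=1$ and uses the Steinberg relation $\{a(x+1),by\}=0$ together with its image under the involution to rewrite the class in terms of symbols of invariant functions. You would need an analogue of this.

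Second, Step 4 — which you correctly identify as the crux — is not carried out, and it is where most of the work lies. The paper (i) proves by a resultant computation that the Deninger path $\{|x|=1,|y|\geqslant1\}$ on $Q_k(x-1,y)=0$ is closed for $k\leqslant-1$ and $k\geqslant17$ (its only torus point is $(1,-1)$); (ii) computes the period of a holomorphic differential over this cycle and over the Deninger cycles of $P_k$ and $R_k$ by explicit chains of substitutions reducing all of them to the same elliptic integral between branch points, and reads off the homology multiples from how the image path winds around the branch points in each range of $k$ (this is where $k\leqslant-1$ versus $k\geqslant17$ actually enters, through the ordering of the branch points relative to the two real intersection values of the image path); and (iii) resolves the remaining $\pm$ sign ambiguities by comparing the $\log|k|$ asymptotics of all three Mahler measures and invoking continuity in $k$. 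Your expectation of the answer is right, but without (i)--(iii) the proof is not complete; in particular the sign issue in (iii) is easy to overlook and is genuinely needed to turn $\tfrac12|m(P_k)\pm m(R_k)|$ into $\tfrac12(m(P_k)+m(R_k))$.
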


This paper is organized as follows. Section 2 reviews some important definitions and famous results that we need. In Section 3, we calculate the diamond operators related to these three families of polynomials. In Section 4, we analyze their Deninger paths. In Section 5, we synthesize the results obtained in Sections 3 and 4 and prove Theorem \ref{theory}.

\section{$K_2$ and the regulator theory}

Let $F$ be a field, a Steinberg symbol on $F$ is a bilinear map
\begin{align*}
c:F^*\times F^*\longrightarrow A
\end{align*}
where $A$ is an abelian group whose operation we write multiplicatively for the moment, which satisfies the following condition:
\begin{align*}
c(a,1-a)=1,\quad \text{for all } a \in F^*-\{1\}.
\end{align*}
By Matsumoto's theorem, the second $K$-group of $F$ can be described as
\begin{align*}
K_2(F)\cong F^*\otimes_\mathbb{Z}F^*/\langle a\otimes(1-a):a\in F,a\neq 0,1 \rangle.
\end{align*}
We also call the class $\{a,b\}$ of $a\otimes b$ in $K_2(F)$ the Steinberg symbol.

Let $f,g\in \mathbb{Q}(C)$. We can define a real-meromorphic differential 1-form
\begin{align*}
\eta(\{f,g\}):=\log|f|d\arg g-\log|g|d\arg f,
\end{align*}
where $d\arg f$ is defined by $\text{Im}(df/f)$, $\eta$ is defined outside the zeros and poles of $f$ and $g$.

Beilinson's conjecture relates $K$-theory of varieties to special values of their $L$-functions via the so-called \emph{regulator}. There is a well defined paring between the tame $K_2$ group $K_2^T(C)\subseteq K_2(\mathbb{Q}(C))$ and $H_1(C(\mathbb{C}),\mathbb{Z})$, giving us the \emph{regulator pairing}
\begin{align*}
\langle\cdotp,\cdotp\rangle:H_1(X;\mathbb{Z})\times K_2^T(C)/\text{torsion}\rightarrow \mathbb{R}\\
(\gamma,\alpha)\mapsto \frac{1}{2\pi}\int_{\gamma}\eta(\alpha).
\end{align*}

Let $C$ be the normalization of the projective closure of the algebraic curve defined by $P(x,y)\in\mathbb{C}[x,y]$. Then $\{x,y\}\in K_2^T(C)\otimes \mathbb{Q}$ is equivalent to $P$ being tempered, $i.e.$, the roots of all the face polynomials of $P$ are roots of unity. Denote the degree of $P$ in $y$ by $d$. Write
\begin{align*}
P(x,y)=a_0(x)\prod_{k=1}^{d}(y-y_k(x)),
\end{align*}
where $y_k(x),k=1,\cdots,d$ are $d$ solutions of $P(x,y)=0$ which maybe chosen to be continuous, piecewise analytic functions of $x$. By Jensen's formula with respect to the variable $y$, we have
\begin{align*}
m(P)-m(a_0(x))
&=\frac{1}{(2 \pi i)^2}\int_{\mathbb{T}^2}\sum_{k=1}^{d}\log |y-y_k(x)\frac{dx}{x}\frac{dy}{y}\\
&=\frac{1}{2\pi i}\int_{\mathbb{T}^1}\sum_{k=1}^{d}\log^{+}|y_k(x)|\frac{dx}{x},
\end{align*}
where $\log^{+}|u|:=\max(\log|u|,0).$

In particular, if $d=2$ and $|y_2(x)|\leqslant 1$ as long as $|x|=1$ (this happens if $P$ is reciprocal). Then the above formula can be written as
\begin{align}
\begin{split}\label{eqn:Den}
m(P)-m(a_0(x))
&=\frac{1}{2\pi i}\int_{S}\log|y_1(x)|\frac{dx}{x}\\
&=-\frac{1}{2 \pi}\int_{S}\eta(\{x,y\}),
\end{split}
\end{align}
where $S=\{(x,y):|x|=1,|y_1(x)|\geqslant 1\}.$ When $S$ can be seen as a cycle in $H_1(C,\mathbb{Z})$, then we recover a regulator integral.

The Bloch-Wigner dilogarithm is defined as
\begin{align*}
D(z):=\log|z|\arg(1-z)-\text{Im}\left(\int_{0}^z \log(1-t)\frac{dt}{t}\right),
\end{align*}
where we take the principal branch of the arg function and the path of integration is a straight line segment from 0 to $z$. This is defined on $\{z\in \mathbb{C}: 0<|z|<1\}$, but it can be extended to a real-valued continuous function on $\mathbb{C} \cup \{\infty\}$ which is real-analytic on $\mathbb{C}-\{0,1\}$.

Let $E$ be an elliptic curve over $\mathbb{C}$, choosing $\tau\in H=\{z:\text{Im}z>0\}$ such that $E(\mathbb{C}) \cong \mathbb{C}/(\mathbb{Z} + \mathbb{Z} \tau)$ and set $q=e^{2\pi i\tau}$. A complex point $P$ on $E$ corresponds an element $u+(\mathbb{Z}+\mathbb{Z}\tau)$ of $\mathbb{C}/(\mathbb{Z}+\mathbb{Z}\tau)$. Writing $z=e^{2\pi iu}$, then the \emph{elliptic dilogarithm} is defined as follows
\begin{align*}
D^E(P):=\underset{l\in \mathbb{Z}}{\sum}D(zq^l),
\end{align*}
which we view as a function from $E(\mathbb{C})$ to $\mathbb{R}$.

Let $\mathbb{Z}[E(\mathbb{C})]$ be the group of divisors on $E$ and let
\begin{align*}
\mathbb{Z}[E(\mathbb{C})]^-\cong\mathbb{Z}[E(\mathbb{C})]/\langle(P)+(-P):P\in E(\mathbb{C})\rangle.
\end{align*}
Let $f,g\in \mathbb{C}(E)^\times$, we define a diamond operation by
\begin{align*}
\diamond:\varLambda^2\mathbb{C}(E)^\times\rightarrow\mathbb{Z}[E(\mathbb{C})]^-\\
f \diamond g=\underset{i,j}{\sum}m_i n_j(S_i-T_j),
\end{align*}
where the divisors of $f$ and $g$ are
\begin{align*}
(f)=\underset{i}{\sum}m_i(S_i)\quad \text{and} \quad (g)=\underset{j}{\sum}n_j(T_j).
\end{align*}

\begin{lemma}\label{lemma:diamond}
	The \emph{elliptic dilogarithm} $D^E$ extends by linearity to a map from $\mathbb{Z}[E(\mathbb{Q})]^-$ to $\mathbb{C}$. Let $f,g\in\mathbb{Q}(E)$ and $\{f,g\}\in K_2(E)$.Then \begin{align*}
	r_E(\{f,g\})[\gamma]=D^E((f)\diamond(g)),
	\end{align*}
	where $[\gamma]$ is a generator of $H_1(E,\mathbb{Z})^-$.
\end{lemma}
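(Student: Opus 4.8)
The plan is to reduce the statement to an explicit analytic computation on the complex uniformization and then recognize the answer. Both sides of the asserted identity are $\mathbb{Z}$-bilinear and alternating in the pair $(f,g)$: the left side because $\eta(\{\cdot,\cdot\})$ is bilinear and alternating and $r_E$ is the induced map on $K_2(E)$, the right side because $(f)\diamond(g)$ is bilinear and alternating in the divisors. Writing $E(\mathbb{C})=\mathbb{C}/(\mathbb{Z}+\mathbb{Z}\tau)$, every $f\in\mathbb{C}(E)^\times$ equals, up to a multiplicative constant, a product $\prod_i\sigma(u-s_i)^{m_i}$ of translated Weierstrass $\sigma$-functions, where $(f)=\sum_i m_i(S_i)$ with $S_i\leftrightarrow s_i$, $\sum_i m_i=0$ and $\sum_i m_i s_i\in\mathbb{Z}+\mathbb{Z}\tau$; similarly $(g)=\sum_j n_j(T_j)$ with $T_j\leftrightarrow t_j$. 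By bilinearity it then suffices to track $\eta$ of symbols built out of such $\sigma$-factors.

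Next I would evaluate the regulator integral. Represent the generator $[\gamma]$ of $H_1(E,\mathbb{Z})^-$ by the antisymmetrization of the image of the real segment $[0,1]\subset\mathbb{C}$, so that
\[
r_E(\{f,g\})[\gamma]=\frac{1}{2\pi}\int_\gamma\eta(\{f,g\})=\frac{1}{2\pi}\int_\gamma\bigl(\log|f|\,d\arg g-\log|g|\,d\arg f\bigr).
\]
Substituting the $\sigma$-product expansions and expanding by bilinearity produces a double sum over the indices $(i,j)$. Since $\sigma$ is only quasi-periodic, $\sigma(u+\omega)=-e^{\lambda_\omega(u+\omega/2)}\sigma(u)$ for a period $\omega$, each building block differs from an honest elliptic function by an exponential--quadratic factor; the extra terms it contributes to $\log|\cdot|$ and $d\arg(\cdot)$ are affine in $u$, and their total contribution to the integral vanishes once the relations $\sum_i m_i s_i\in\mathbb{Z}+\mathbb{Z}\tau$ and $\sum_j n_j t_j\in\mathbb{Z}+\mathbb{Z}\tau$ are invoked. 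What survives is an expression of the shape
\[
r_E(\{f,g\})[\gamma]=\sum_{i,j}m_i n_j\,\Phi(s_i-t_j),
\]
where $\Phi$ is a fixed real-valued function on $E(\mathbb{C})$ assembled from the regularized elliptic building block $\eta(\{\sigma(u),\sigma(u-w)\})$.

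Finally I would identify $\Phi$ with the elliptic dilogarithm. The function $\Phi$ is real-analytic on $E(\mathbb{C})$ away from torsion and is odd, because $\eta$ changes sign under $[-1]$ while $[\gamma]$ is anti-invariant; hence it descends to $\mathbb{Z}[E(\mathbb{C})]^-$, exactly as $D^E$ does. Comparing Fourier expansions in the real part of the complex coordinate --- using the classical Fourier series of $\log|\sigma|$ and of $d\arg\sigma$ on the left, and the Eisenstein--Kronecker (Bloch) series expansion of $D^E(W)=\sum_{l\in\mathbb{Z}}D(e^{2\pi i w}q^l)$ on the right --- one checks that the two expansions agree, so $\Phi=D^E$. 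Summing over $(i,j)$ gives $r_E(\{f,g\})[\gamma]=D^E((f)\diamond(g))$, and the extension of $D^E$ by linearity to $\mathbb{Z}[E(\mathbb{Q})]^-$ is automatic since $D^E$ is a genuine real-valued function on $E(\mathbb{C})$ vanishing on the symmetric divisors $(P)+(-P)$. I expect the main obstacle to be this last analytic step together with the vanishing of the Eisenstein corrections in the middle step: rigorously justifying the term-by-term integration, controlling convergence, and matching normalizations so that no spurious constant or factor of $2\pi$ remains. The bilinear reduction of the first paragraph, by contrast, is purely formal.
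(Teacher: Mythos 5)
The paper does not prove this lemma at all: it is quoted as a known result (Bloch's theorem relating the regulator pairing on an elliptic curve to the elliptic dilogarithm of the diamond of the divisors; see, e.g., the Brunault--Zudilin book cited as \cite{Zudilin2020} or Bloch's original work). So there is no in-paper argument to compare yours against; what you have written is an outline of the classical proof, and the outline follows the standard route (bilinear reduction, factorization of $f$ and $g$ into translated theta/sigma factors, term-by-term evaluation of $\int_\gamma\eta$, identification of the resulting kernel with $D^E$).

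As a proof, however, the proposal has a genuine gap exactly where you flag it, and that gap is the entire content of the lemma. The step ``comparing Fourier expansions \dots one checks that the two expansions agree, so $\Phi=D^E$'' is Bloch's computation: one must show that the regulator integral of the basic symbol produces the Kronecker--Eisenstein series and that this series coincides with $\sum_{l\in\mathbb{Z}}D(zq^l)$. Without carrying this out you have only shown that $r_E(\{f,g\})[\gamma]=\sum_{i,j}m_in_j\,\Phi(S_i-T_j)$ for \emph{some} odd real-valued kernel $\Phi$, which is far weaker than the stated identity. Two further points need care. First, the natural output of the integral is Bloch's full regulator function, which decomposes into the odd part $D^E$ and an even companion built from $\log|z|\log|1-z|$; it is the anti-invariance of $[\gamma]\in H_1(E,\mathbb{Z})^-$ (not merely the divisor relations $\sum_im_is_i\in\mathbb{Z}+\mathbb{Z}\tau$) that kills the even part, and your argument for discarding the quasi-periodicity corrections should be routed through that observation rather than asserted to ``vanish.'' Second, the normalization constant is not pinned down by your sketch: whether the right-hand side is $D^E$, $\frac{1}{\pi}D^E$, or $\frac{1}{2\pi}D^E$ depends on the choice of generator $[\gamma]$ and on the $\frac{1}{2\pi}$ in the definition of the pairing, and since the paper later uses this lemma with explicit integer coefficients (e.g. $-6(P)-6(2P)$ and $-8(S)$), an unresolved constant would propagate into Theorem 1.1. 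If you want a self-contained proof you should either reproduce Bloch's Fourier computation in full or reduce to the published statement with its normalization made explicit.
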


Let $\sigma$ be an automorphism of order 2 of $C$ and let $f:C\rightarrow C/\langle\sigma\rangle$ be the quotient map. Let $M\in K_2^T(C)$, then we have
\begin{align}
f^*f_{*}(M)&=M\sigma(M), \label{eqn:fbM}\\	
\int_\gamma\eta(f^*f_*(M))&=\int_{f(\gamma)}\eta(f_*(M)) \label{eqn:fbr},
\end{align}
where $f_*$ is the transfer homomorphism and $f^*$ is the restriction homomorphism. $M$ may not be pushed directly to the quotient curve, however there are some ways to push it down to the quotient curve, see Lemma \ref{pro:ab} or Bosman \cite{Bosman2004} for more details.

\section{The diamond operators and the relation between the regulators}

In this section, we first compute the pushforward of $M_1=\{x,y\}$ and $M_2=\{x+1,y\}$ in the $K_2$ of the genus 2 family given by $Q_k(x,y)$ down to the quotient curves of genus 1. Then we compare these pushforwards with the nature elements in $K_2$ of these quotient curves by comparing the diamond operators of these elements.

\subsection{The genus 2 family}

The reciprocal family $Q_k(x,y)=y^2+(x^4+kx^3+2kx^2+kx+1)y+x^4$ generally defines a genus 2 curve $Z_k:Q_k(x,y)=0$, we can make a birational transformation
\begin{equation*}
\begin{cases}
x=\frac{X_1+1}{X_1-1}\qquad\qquad\qquad &X_1=\frac{x_1+1}{x_1-1}   \\
y=\frac{2X_1Y_1-(2k+1)X_1^4+(2k-6)X_1^2-1}{(X_1-1)^4}&Y_1=\frac{4(y^2-x^4)}{y(x-1)^3(x+1)},
\end{cases}
\end{equation*}
where $Z_k'$ is the curve given by $$Y_1^2=(k^2+k)X_1^6+(-2k^2+5k+4)X_1^4+(k^2-5k+8)X_1^2-k+4.$$
Let $h(v)=(k^2+k)v^3+(-2k^2+5k+4)v^2+(k^2-5k+8)v-k+4$.
Then $Z_k'$ is defined by
$Y_1^2=h(X_1^2)$. We can see $Z_k'$ has two automorphisms $\sigma_1:X_1\rightarrow -X_1,Y_1\rightarrow Y_2$ (this corresponds to the automorphism $x\rightarrow \frac{1}{x},y\rightarrow \frac{1}{y}$ of $Z_k$) and $\sigma_2:X_1\rightarrow -X_1,Y_1\rightarrow -Y_1$ (this corresponds to the automorphism $x\rightarrow \frac{1}{x},y\rightarrow \frac{y}{x^4}$ of $Z_k$).

There are two quotient maps
\begin{align*}
f_1:Z_k&\rightarrow Z_k/\langle\sigma_1\rangle:W_0^2=h(Z_0)\\
&(X_1,Y_1)\mapsto(X_1^2,Y_1),
\end{align*}
and
\begin{align*}
f_2:Z_k\rightarrow Z_k/\langle\sigma_2\rangle\\
(X_1,Y_1)\mapsto\left(\frac{1}{X_1^2},\frac{Y_1}{X_1^3}\right),
\end{align*}
where $Z_k/\langle\sigma_2\rangle:W^2=Z^3h\left(\frac{1}{Z}\right)=(4-k)Z^3+(k^2-5k+8)Z^2+(-2k^2+5k+4)Z+k^2+k$.
Making a second transformation
\begin{equation*}
\begin{cases}
(4-K)Z=X\\
(4-K)W=Y,
\end{cases}
\end{equation*}
we have an isomorphism $\psi_1 :Z_k/\langle\sigma_2\rangle\cong E_k$ where $E_k$ is defined by $Y^2=X^3+(k^2-5k+8)X^2+(-2k^2+5k+4)(4-k)X+(k^2+k)(4-k)^2$.
Making another transformation
\begin{equation*}
\begin{cases}
X=4x_1-k\\
Y=8y_1,
\end{cases}
\end{equation*}
we have an isomorphism $\psi_2 :F_k\cong E_k$ where $F_k$ is defined by $y_1^2=x_1\left(x_1^2+\left(\frac{(k-4)^2}{4}-2\right)x_1+1\right)$.
Making the last transformation
\begin{equation*}
\begin{cases}
x_1=\frac{(k-4)+x_0+y_0}{x_0+y_0}\qquad\qquad &x_0=\frac{(k-4)x_1-2y_1}{2x_1(x_1-1)}     \\
y_1=\frac{(k-4)(y_0-x_0)(k-4+x_0+y_0)}{2(x_0+y_0)^2}&y_0=\frac{(k-4)x_1+2y_1}{2x_1(x_1-1)},
\end{cases}
\end{equation*}
we have an isomorphism $\psi_3:R_k\cong F_k$, where $R_k$ represent the loci of  $R_k(x_0,y_0)=x_0+\frac{1}{x_0}+y_0+\frac{1}{y_0}+(k-4).$

There are two useful results from \cite{Liu}

\begin{lemma}
	 $Q_k(x,y)$ and $Q_k(x-1,y)$ are tempered polynomials.
\end{lemma}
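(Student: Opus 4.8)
The plan is to verify temperedness directly from the definition recalled above: for a two--variable polynomial $P$, each edge $\ell$ of the Newton polygon $\Delta(P)$ carries a \emph{face polynomial} $P_\ell$, the sum of those terms of $P$ whose exponent vector lies on $\ell$, and one must check that all nonzero roots of every $P_\ell$ are roots of unity. A preliminary observation is that the parameter $k$ occurs in $Q_k$ and in $Q_k(x-1,y)$ only through monomials interior to the respective Newton polygons, so the Newton polygons and all face polynomials are independent of $k$; it therefore suffices to inspect eight explicit edge polynomials.

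For $Q_k(x,y)=y^2+(x^4+kx^3+2kx^2+kx+1)y+x^4$ the exponent vectors are $(0,2),(0,1),(1,1),(2,1),(3,1),(4,1),(4,0)$, the $k$--terms sitting at the interior points $(3,1),(2,1),(1,1)$, and $\Delta(Q_k)=\mathrm{conv}\{(0,1),(0,2),(4,1),(4,0)\}$. Its four edges give, up to a monomial factor, the face polynomials $1+y$ on $(0,1)$--$(0,2)$, $1+y$ on $(4,0)$--$(4,1)$, $y+x^4$ on $(0,2)$--$(4,1)$, and $y+x^4$ on the lower edge $(0,1)$--$(4,0)$ (whose only lattice points are its endpoints); each has all its nonzero roots equal to $-1$, so $Q_k$ is tempered.

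For the shifted family I would first expand, using $(x-1)^4+k(x-1)^3+2k(x-1)^2+k(x-1)+1=x^4+(k-4)x^3+(6-k)x^2-4x+2$, to get
\[
Q_k(x-1,y)=y^2+\bigl(x^4+(k-4)x^3+(6-k)x^2-4x+2\bigr)y+(x-1)^4 .
\]
Now the $y^0$--term $(x-1)^4$ supplies all exponents $(0,0),(1,0),\dots,(4,0)$, so $\Delta(Q_k(x-1,y))=\mathrm{conv}\{(0,0),(4,0),(4,1),(0,2)\}$, with the $k$--terms at the interior points $(3,1),(2,1)$. The four face polynomials are, up to a monomial factor, $(x-1)^4$ on the lower edge $(0,0)$--$(4,0)$, $1+y$ on the right edge $(4,0)$--$(4,1)$, $y+x^4$ on the top edge $(0,2)$--$(4,1)$, and $(1+y)^2$ on the left edge $(0,0)$--$(0,2)$ (gathering $1+2y+y^2$). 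Every nonzero root is $1$ or $-1$, hence a root of unity, so $Q_k(x-1,y)$ is tempered.

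There is essentially no obstacle here: the whole argument reduces to writing down two Newton polygons and inspecting eight edge polynomials. The only things requiring a moment of care are (i) confirming that the $k$--dependent monomials are interior points, so that $\Delta$ and the face polynomials genuinely do not move with $k$; and (ii) in the shifted case, noting that the full expansion of $(x-1)^4$ fills in the entire bottom row of exponents, replacing the binomial lower face of $Q_k$ by the cyclotomic face polynomial $(x-1)^4$ (whose roots are all equal to $1$).
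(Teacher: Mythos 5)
Your verification is correct: the Newton polygons, the location of the $k$-dependent monomials at interior lattice points, the expansion $x^4+(k-4)x^3+(6-k)x^2-4x+2$ of the middle coefficient, and all eight face polynomials ($y(1+y)$, $x^4(1+y)$, $1+t$ on the two slanted edges, $(x-1)^4$, $(1+y)^2$, etc.) check out, and every nonzero root is $\pm1$. The paper itself gives no proof of this lemma --- it is quoted from the reference by Liu and Qin --- so there is nothing internal to compare against, but your direct Newton-polygon computation is exactly the standard argument one would expect for such a statement, and it is complete as written.
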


\begin{corollary}
	 $\{x+1,y\}\in K_2^T(Z_k)\otimes\mathbb{Q}$.
\end{corollary}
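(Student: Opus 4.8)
The plan is to prove the Corollary that $\{x+1,y\} \in K_2^T(Z_k) \otimes \mathbb{Q}$ by showing that $Q_k(x-1,y)$ is tempered, since the preceding Lemma asserts exactly this. The key observation is that the substitution $x \mapsto x-1$ sends the symbol $\{x,y\}$ on the curve $Q_k(x-1,y)=0$ to the symbol $\{x+1,y\}$ on $Z_k : Q_k(x,y)=0$, so membership of the latter in $K_2^T(Z_k)\otimes\mathbb{Q}$ is equivalent to $Q_k(x-1,y)$ being tempered (together with $Q_k(x,y)$ being tempered, which handles $\{x,y\}$ itself, but here we only need the $\{x+1,y\}$ part). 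The criterion stated in Section 2 is that $\{x,y\} \in K_2^T(C)\otimes\mathbb{Q}$ is equivalent to temperedness of the defining polynomial, i.e. all face polynomials of the Newton polygon have only roots of unity as roots; I would invoke this directly.

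First I would write out $Q_k(x-1,y) = y^2 + B_k(x-1)y + (x-1)^4$ where $B_k(x) = x^4+kx^3+2kx^2+kx+1$, and expand $B_k(x-1)$ and $(x-1)^4$ explicitly as polynomials in $x$. Then I would draw the Newton polygon of $Q_k(x-1,y)$ in the $(x,y)$-plane: the $y$-degree is $2$, the constant-in-$y$ term is $(x-1)^4$ contributing $x$-degrees $0$ through $4$, the term $B_k(x-1)y$ contributes $x$-degrees $0$ through $4$ at height $y=1$, and $y^2$ sits at $(0,2)$. So the Newton polygon is (generically) the triangle-ish region with vertices among $(0,0)$, $(4,0)$, $(0,2)$, and possibly $(4,1)$, depending on whether the $x^4$ coefficient of $B_k(x-1)$ vanishes (it is $1$, so $(4,1)$ is a vertex). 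I would then enumerate each edge and compute its face polynomial: the bottom edge gives $(x-1)^4$ whose only root is $1$, a root of unity; the left edge ($x=0$) gives $y^2 + B_k(-1)y + 1 = y^2 + (2-k)y+1$ — wait, $B_k(-1) = 1 - k + 2k - k + 1 = 2$, independent of $k$, so this is $y^2+2y+1=(y+1)^2$, root $-1$; the top edges emanating from $(0,2)$ to $(4,1)$ and $(4,1)$ to $(4,0)$ give respectively $y^2 + y$ (after factoring $y$) hence root $0$ or the monomial edge, and $y + x$ type edges — in each case the face polynomials are binomials or have cyclotomic roots. One then checks the remaining sloped edge face polynomials reduce to products of cyclotomics.

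The main obstacle I anticipate is bookkeeping: correctly identifying the vertices of the Newton polygon of $Q_k(x-1,y)$ (which may differ in shape from that of $Q_k(x,y)$ because translation in $x$ changes which monomials appear, though it cannot change the $x$-degree or $y$-degree), and then verifying that every face polynomial — there could be three or four edges — is a product of cyclotomic polynomials up to a monomial factor, uniformly in the parameter $k$. The genuinely sloped edges are where this could fail; for those I would look for the pattern that the face polynomial along an edge from $(0,2)$ down to the bottom is something like $(y-x^a)(y-x^b)$ or $(y+x^a)^2$, whose roots are roots of unity when evaluated at roots of unity. Since the Lemma from \cite{Liu} already records that both $Q_k(x,y)$ and $Q_k(x-1,y)$ are tempered, I would in the write-up simply cite that Lemma and deduce the Corollary in one line via the $x\mapsto x-1$ identification of symbols; the Newton-polygon verification sketched above is the content behind that cited Lemma and could be relegated to a remark or omitted.
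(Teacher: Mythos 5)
Your proposal matches the paper's (implicit, one-line) derivation: the Corollary is stated as an immediate consequence of the cited Lemma that $Q_k(x-1,y)$ is tempered, combined with the temperedness criterion for $\{x,y\}\in K_2^T(C)\otimes\mathbb{Q}$ and the identification of $\{x,y\}$ on the shifted curve with $\{x+1,y\}$ on $Z_k$. Your additional Newton-polygon sketch (correctly) reconstructs the content of the Lemma from \cite{Liu}, which the paper does not reprove, so your write-up is consistent with the paper's approach.
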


Let us do an analysis as in \cite{Liu}, let $D_{1,k}$ and $D_{2,k}$ be the elliptic curves $Z_k/\langle\sigma_1\rangle$ and $Z_k/\langle\sigma_2\rangle$ respectively. By \eqref{eqn:fbM}, we have
\begin{align*}
f_1^{*}f_{1*}(M_1)=M_1\sigma_1(M_1)=\{x,y\}\left\{\frac{1}{x},\frac{1}{y}\right\}=2M_1,\\
f_2^*f_{2*}(M_2)=M_2\sigma_2(M_2)=\{x+1,y\}\left\{\frac{x+1}{x},\frac{y}{x^4}\right\}=2M_2-M_1.
\end{align*}

Now we can pushforward the regulator integral of $M_2=\{x+1,y\}$ on $Z_k$ to $D_{2,k}$, by above equations and \eqref{eqn:fbr}, we have
\begin{align*}
\frac{1}{\pi}\int_{\gamma}\eta(M_1)=\frac{1}{2\pi}\int_{\gamma}\eta(f_1^{*}f_{1*}(M_1))=\frac{1}{2\pi}\int_{f_1(\gamma)}\eta(f_{1*}(M_1)),
\end{align*}
and
\begin{align*}
\frac{1}{2\pi}\left(2\int_{\gamma}\eta(M_2)-\int_{\gamma}\eta(M_1)\right)=\frac{1}{2\pi}\int_{\gamma}\eta(f_2^{*}f_{2*}(M_2))=\frac{1}{2\pi}\int_{f_2(\gamma)}\eta(f_{2*}(M_2)),
\end{align*}
where $\gamma$ is a cycle in $H_1(Z_k,\mathbb{Z})$. $f_1(\gamma)$ and $f_2(\gamma)$ are cycles in $H_1(D_{1,k},\mathbb{Z})$, $H_1(D_{2,k},\mathbb{Z})$ respectively. Hence, by the above equations, the regulator integral of $M_2$ on $\gamma$ is
\begin{align}
\begin{split}\label{eqn:M12}
\frac{1}{2\pi}\int_{\gamma}\eta(M_2)=\frac{1}{8\pi}\int_{f_1(\gamma)}\eta(f_{1*}(M_1))+\frac{1}{4\pi}\int_{f_2(\gamma)}\eta(f_{2*}(M_2)).
\end{split}
\end{align}

\subsection{The quotient curve $D_{1,k}$}

$D_{1,k}=Z_k/\langle\sigma_1\rangle:W_0^2=(k^2+k)Z_0^3+(-2k^2+5k+4)Z_0^2+(k^2-5k+8)Z_0+(4-k)$, making a birational transformation $\phi_1$
\begin{align*}
\begin{cases}
Z_0=\frac{4X_0+k^2-3k}{k^2+k}\\
W_0=\frac{4(2Y_0+(k-2)X_0+k)}{k^2+k},
\end{cases}
\end{align*}
this gives $\phi_1:D_{1,k}\cong U_k$, where $U_k:Y_0^2+(k-2)X_0Y_0+kY_0=X_0^3$. There is also a birational transformation $\phi_2$
\begin{align*}
\begin{cases}
X_0=k\frac{x_2+y_2+1}{x_2+y_2-k}\qquad\quad &x_2=\frac{X_0-Y_0}{X_0-k}\\
Y_0=k\frac{-kx_2+y_2+1}{x_2+y_2-k} &y_2=\frac{Y_0+(k-1)X_0+k}{X_0-k},
\end{cases}
\end{align*}
which gives $\phi_2:P_k\cong U_k$ where $P_k$ represent the loci of $P_k(x_2,y_2)$.

Let $P$ be the point $(k,k)$ on $U_k$ which is a torsion point of order 6. Lalin and Wu \cite{Lalin2018} get the following result
\begin{lemma}
Let $\gamma_{P_k}=\{(x_2,y_2)\in P_k:|x_2|=1,|y_2|\geqslant 1\}$, $\gamma_{Z_k}=\{(x,y)\in Z_k:|x+1|=1,|y|\geqslant 1\}$ and $[\gamma_2]$ be a generator of $H_1(U_k,\mathbb{Z})^-$. If $[(\phi_1\circ f_1)(\gamma_{Z_k})]=p_1[\gamma_2]$ and $[\phi_2(\gamma_{P_k})]=p_2[\gamma_2]$, then
	\begin{align*}
     r_{Z_k}(\{x,y\})[\gamma_{Z_k}]=p_1D^{U_k}(-6(P)-6(2P)),
	\end{align*}
	and
	\begin{align}
	\begin{split}\label{eqn:rPk}
	r_{P_k}(\{x_2,y_2\})[\gamma_{P_k}]=p_2D^{U_k}(-6(P)-6(2P)),
	\end{split}
	\end{align}
\end{lemma}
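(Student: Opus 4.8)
The plan is to push both regulator integrals onto the single elliptic curve $U_k$ and reduce the two assertions to one diamond computation, which Lemma~\ref{lemma:diamond} turns into a value of $D^{U_k}$. Set $\delta_k:=-6(P)-6(2P)\in\mathbb{Z}[U_k(\mathbb{C})]^-$, where $P=(k,k)$. I would establish the two diamond identities on $U_k$: regarding $x_2,y_2$ as functions on $U_k$ through $\phi_2$,
\[
(x_2)\diamond(y_2)=\delta_k,
\]
and regarding $f_{1*}\{x,y\}$ as a class in $K_2(U_k)\otimes\mathbb{Q}$ through $\phi_1$, that its diamond equals $2\delta_k$. Granting these, \eqref{eqn:rPk} is immediate, because $\phi_2$ identifies $\{x_2,y_2\}$, $D^{P_k}$ and $\gamma_{P_k}$ with their images, so Lemma~\ref{lemma:diamond} together with $[\phi_2(\gamma_{P_k})]=p_2[\gamma_2]$ gives $r_{P_k}(\{x_2,y_2\})[\gamma_{P_k}]=p_2D^{U_k}(\delta_k)$. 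For the $Z_k$-side, the symbol $\{x,y\}$ is $\sigma_1$-invariant since $\sigma_1\{x,y\}=\{1/x,1/y\}=\{x,y\}$, so by \eqref{eqn:fbM}, $\{x,y\}=\frac{1}{2}f_1^*f_{1*}\{x,y\}$ in $K_2\otimes\mathbb{Q}$, and by \eqref{eqn:fbr} the regulator of $\{x,y\}$ over $\gamma_{Z_k}$ equals half that of $f_{1*}\{x,y\}$ over $f_1(\gamma_{Z_k})$; transporting along $\phi_1$, using $[(\phi_1\circ f_1)(\gamma_{Z_k})]=p_1[\gamma_2]$ and Lemma~\ref{lemma:diamond} with the diamond value $2\delta_k$, we get $r_{Z_k}(\{x,y\})[\gamma_{Z_k}]=\frac{p_1}{2}D^{U_k}(2\delta_k)=p_1D^{U_k}(\delta_k)$.

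For the identity $(x_2)\diamond(y_2)=\delta_k$ I would first read off $\mathrm{div}(x_2)$ on the plane cubic $P_k$: writing $x_2=X/Z$ as a ratio of linear forms and intersecting with $P_k$ by B\'ezout gives $\mathrm{div}(x_2)=(0,0)+(0,-1)-[1:0:0]-[1:-1:0]$, and $\mathrm{div}(y_2)$ is its image under the coordinate swap $(x_2,y_2)\mapsto(y_2,x_2)$. The essential step is to locate all these points inside the cyclic group generated by $P=(k,k)$, which has order $6$. A short computation with the addition law on $U_k$ gives $2P=(0,0)$ and $3P=(-1,-1)$; pulling these back along $\phi_2^{-1}$ identifies $(0,-1)_{P_k}=2P$ and $(0,0)_{P_k}=3P$, while examining the three points at infinity of $P_k$ gives $[1:-1:0]_{P_k}=O$ and $[1:0:0]_{P_k}=-P$, and the coordinate swap (which becomes $[-1]$ on $U_k$) takes care of the remaining points. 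Expanding $(x_2)\diamond(y_2)=\sum m_i n_j(S_i-T_j)$ (difference in the group law) into sixteen terms $\pm(nP)$ and reducing modulo the relations $(nP)=-((6-n)P)$ of $\mathbb{Z}[U_k(\mathbb{C})]^-$ --- under which the classes of $O$ and of the $2$-torsion point $3P$ are themselves $2$-torsion, hence annihilated by the linear map $D^{U_k}$ --- collapses the sum to $-6(P)-6(2P)=\delta_k$.

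For the second identity the additional work is to produce $f_{1*}\{x,y\}$ explicitly on $U_k$, for which I would follow the descent procedure of Bosman~\cite{Bosman2004} (compare Lemma~\ref{pro:ab}). On the hyperelliptic model $Z_k'\colon Y_1^2=h(Z_0)$ with $Z_0=X_1^2$, one has $x=\frac{(X_1+1)^2}{X_1^2-1}$ and $X_1^2-1=f_1^*(Z_0-1)$, so $\{x,y\}=2\{X_1+1,y\}-\{f_1^*(Z_0-1),y\}$; the second term pushes forward to $\{Z_0-1,N_{f_1}(y)\}=0$, because $\sigma_1$ sends $y$ to $1/y$ and hence $N_{f_1}(y)=1$, and the remaining $2f_{1*}\{X_1+1,y\}$ is handled by Hilbert~90 (writing $y=w/\sigma_1(w)$) together with the projection formula, yielding $f_{1*}\{x,y\}$ as an explicit symbol on $U_k$. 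One then repeats the divisor bookkeeping --- the relevant special points now being the branch points of $f_1$, the preimages of the zeros and poles of $x$ and $y$ on $Z_k$, and $\infty$ --- checks that they again lie in $\langle P\rangle$, and finds the diamond to be $-12(P)-12(2P)=2\delta_k$, consistently with the factor $2$ in $f_1^*f_{1*}(M_1)=2M_1$.

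The hardest part is this explicit bookkeeping: determining the exact divisors of $x_2,y_2$ and of the pushed-forward functions and, above all, pinning down which multiple of $P$ each special point is, which requires chasing points through the whole chain of birational maps relating $Z_k$, $Z_k'$, $D_{1,k}$, $U_k$ and $P_k$ set up in Section~3, and then carrying out the reduction in $\mathbb{Z}[U_k(\mathbb{C})]^-$ with enough care that the coefficients come out to exactly $-6$; the agreement of the two diamonds up to the factor $2$ forced by $f_1^*f_{1*}(M_1)=2M_1$ provides a useful internal check.
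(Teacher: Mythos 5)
The paper itself offers no proof of this lemma --- it is cited directly from Lalin and Wu \cite{Lalin2018} --- and your outline reconstructs exactly their argument: compute $(x_2)\diamond(y_2)=-6(P)-6(2P)$ on $U_k$ by locating the zeros and poles of $x_2,y_2$ inside the order-$6$ subgroup $\langle P\rangle$, descend $\{x,y\}$ through $f_1$ using $f_1^*f_{1*}(M_1)=2M_1$ to a symbol on $U_k$ whose diamond is $-12(P)-12(2P)$, and convert both sides via Lemma \ref{lemma:diamond}. The intermediate facts you assert (the divisor of $x_2$, the identifications $(0,-1)\leftrightarrow 2P$, $(0,0)\leftrightarrow 3P$, $[1{:}0{:}0]\leftrightarrow -P$, $[1{:}{-1}{:}0]\leftrightarrow O$, and the vanishing of $D^{U_k}$ on the classes of $O$ and the $2$-torsion point) all check out, so your approach is correct and essentially the same as the source's.
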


By the above Lemma, we have
\begin{align}
\begin{split}\label{eqn:M1}
\int_{f_1(\gamma_{Z_k})}\eta(f_{1*}(M_1))
&=\int_{\gamma_{Z_k}}\eta(f_1^{*}f_{1*}(M_1))\\
&=r_{Z_k}(2\{x,y\})[\gamma_{Z_k}]\\
&=2p_1D^{U_k}(-6(P)-6(2P)).
\end{split}
\end{align}

\subsection{The quotient curve $D_{2,k}$}
First we prove a Lemma which allow us to calculate $f_{2*}(M_2)$ so as to calculate its regulator integral in \eqref{eqn:M12}.
\begin{lemma}\label{pro:ab}
	Suppose we have rational functions $a(Z, W), b(Z, W)$ on $D_{2,k}$ such that
	\begin{align*} a(x+1)+by=1
	\end{align*}
in which we also see $a,b$ as functions on $Z_k$.
	Then we have
	\begin{align*}
	\int_{f_2(\gamma)}\eta(f_{2*}(M_2))=
	&-\int_{f_2(\gamma)}\eta(f_{2*}(\{a,b\}))-\frac{1}{2}\int_{f_2(\gamma)}\eta\left(f_{2*}\left(\left\{a,\frac{y^2}{x^4}\right\}\right)\right)\\
	&-\frac{1}{2}\int_{f_2(\gamma)}\eta\left(f_{2*}\left(\left\{\frac{(x+1)^2}{x},b\right\}\right)\right).	\end{align*}
\end{lemma}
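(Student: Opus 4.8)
The plan is to exploit the relation $a(x+1) + by = 1$ together with the Steinberg relation and bilinearity of the symbol $\{\cdot,\cdot\}$ in $K_2$, then push everything down via $f_{2*}$ and apply $\eta(\cdot)$ and integration along $f_2(\gamma)$. First I would set $u = a(x+1)$ and $v = by$, so that $u + v = 1$ and hence $\{u, v\} = \{u, 1-u\}$ is trivial in $K_2(\mathbb{Q}(D_{2,k}))$ modulo torsion; therefore $\int_{f_2(\gamma)} \eta(f_{2*}(\{u,v\})) = 0$, since the regulator pairing kills torsion and $f_{2*}$, $\eta$, and $\int$ are all additive. Expanding $\{u,v\} = \{a(x+1), by\}$ by bilinearity gives
\begin{align*}
\{a, b\} + \{a, y\} + \{x+1, b\} + \{x+1, y\} = \{u,v\} \equiv 0,
\end{align*}
so that $M_2 = \{x+1, y\} \equiv -\{a,b\} - \{a, y\} - \{x+1, b\}$ in $K_2^T(Z_k)\otimes\mathbb{Q}$ (all identities understood modulo torsion and modulo the kernel of the regulator).

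Next I would rewrite the two "mixed" symbols $\{a, y\}$ and $\{x+1, b\}$ so that their second (resp.\ first) entries are $\sigma_2$-invariant up to the remaining pieces, which is what makes them descend cleanly through $f_2$. Since $\sigma_2$ sends $x \mapsto 1/x$ and $y \mapsto y/x^4$, the function $y^2/x^4$ is $\sigma_2$-invariant, as is $(x+1)^2/x$. Using bilinearity, $\{a, y\} = \tfrac12\{a, y^2\} = \tfrac12\{a, y^2/x^4\} + \tfrac12\{a, x^4\} = \tfrac12\{a, y^2/x^4\} + 2\{a, x\}$, and similarly $\{x+1, b\} = \tfrac12\{(x+1)^2/x, b\} + \tfrac12\{x, b\}$. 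Substituting these into the expression for $M_2$ and collecting terms, the symbols involving $\{a,x\}$ and $\{x,b\}$ must either cancel or be shown to push forward trivially; here I expect that $f_{2*}$ applied to symbols of the form $\{*, x\}$ or $\{x, *\}$ contributes nothing to the regulator along $f_2(\gamma)$ — essentially because $f_2^* f_{2*}(\{a,x\}) = \{a,x\} + \sigma_2\{a,x\} = \{a,x\} + \{\sigma_2 a, 1/x\}$, and one argues this integrates to zero either by a direct exactness/closed-form argument or because the relevant divisorial support makes the diamond operator vanish. Applying $f_{2*}$ to the surviving three terms $-\{a,b\}$, $-\tfrac12\{a, y^2/x^4\}$, $-\tfrac12\{(x+1)^2/x, b\}$ and then $\eta$ and $\int_{f_2(\gamma)}$ yields exactly the claimed formula.

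The main obstacle will be the careful bookkeeping of which auxiliary symbols actually vanish after pushforward: a priori $\{a,x\}$, $\{x,b\}$, and leftover powers need not be individually trivial, so one must verify that the combination of exponents produced by the $\sigma_2$-symmetrization precisely cancels them, or that each pushes forward to an exact form on $D_{2,k}$ (so that its integral over the cycle $f_2(\gamma)$ is zero). A secondary subtlety is justifying that "trivial in $K_2 \otimes \mathbb{Q}$ modulo torsion" suffices for the regulator-integral identity — this follows because the pairing $\langle\cdot,\cdot\rangle$ factors through $K_2^T/\text{torsion}$, but one should note that $a$ and $b$, viewed on $Z_k$ rather than on the quotient, may have poles/zeros forcing a check that $\{a,b\}$ and the mixed symbols still lie in the tame part. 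Once those vanishing statements are in hand, the rest is just linear algebra on symbols, and the stated three-term decomposition of $\int_{f_2(\gamma)}\eta(f_{2*}(M_2))$ follows immediately.
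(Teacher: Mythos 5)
Your proposal is sound and rests on the same two pillars as the paper's proof: the Steinberg relation $\{a(x+1),by\}=0$ expanded by bilinearity, and symmetrization with respect to $\sigma_2$ so that everything descends through $f_2$. The difference is purely organizational. The paper applies $\sigma_2$ to the whole expanded relation (using $\sigma_2(a)=a$, $\sigma_2(b)=b$) and adds the two identities; the mixed terms then combine directly into $\{a,y^2/x^4\}$ and $\{(x+1)^2/x,b\}$, and the left-hand side becomes $2\{x+1,y\}-\{x,y\}=f_2^*f_{2*}(M_2)$, so no residual symbols ever appear. You instead split each mixed symbol term by term, which leaves the residues $2\{a,x\}$ and $\tfrac{1}{2}\{x,b\}$, and you flag their disposal as ``the main obstacle.'' It is not an obstacle: since $a$ and $b$ are $\sigma_2$-invariant and $\sigma_2(x)=1/x$, equation \eqref{eqn:fbM} gives $f_2^*f_{2*}(\{a,x\})=\{a,x\}+\{a,1/x\}=\{a,1\}=0$ and likewise $f_2^*f_{2*}(\{x,b\})=0$, so by \eqref{eqn:fbr} their contributions to $\int_{f_2(\gamma)}\eta(f_{2*}(\cdot))$ vanish identically --- no exactness or diamond-operator argument is needed; you wrote down exactly the right expression $\{a,x\}+\{\sigma_2 a,1/x\}$ and stopped one line short of evaluating it. With that line added your decomposition reproduces the lemma, and your remaining caveats (working in $K_2\otimes\mathbb{Q}$ modulo torsion, which is where the regulator pairing lives, and membership of the auxiliary symbols in the tame part) are handled in the paper in exactly the way you anticipate.
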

\begin{proof}
	By the properties of Steinberg symbols, we have
	\begin{align}
	\begin{split}\label{eqn:ax}
	0&= \{a(x+1),by\}\\
	&=\{a,b\}+\{a,y\}+\{x+1,b\}+\{x+1,y\}.
	\end{split}
	\end{align}
Now consider the automorphism $\sigma_2: x\rightarrow \frac{1}{x},y\rightarrow \frac{y}{x^4}$ of $Z_k$ acting on \eqref{eqn:ax}, since $a$ and $b$ are invariant under this automorphism, we have
	\begin{align}
	\begin{split}\label{eqn:ab}
	0&= \{a,b\}+\left\{a,\frac{y}{x^4}\right\}+\left\{\frac{x+1}{x},b\right\}+\left\{\frac{x+1}{x},\frac{y}{x^4}\right\}\\
	&=\{a,b\}+\left\{a,\frac{y}{x^4}\right\}+\left\{\frac{x+1}{x},b\right\}+\{x+1,y\}-\{x,y\}.
	\end{split}
	\end{align}
	\begin{align*}
	\eqref{eqn:ax}+\eqref{eqn:ab}&\Rightarrow 0 = 2\{a,b\}+\left\{a,\frac{y^2}{x^4}\right\}+\left\{\frac{(x+1)^2}{x},b\right\}+2\{x+1,y\}-\{x,y\}\\
	&\Rightarrow 2\{x+1,y\}=\{x,y\}-2\{a,b\}-\left\{a,\frac{y^2}{x^4}\right\}-\left\{\frac{(x+1)^2}{x},b\right\}\\
	&\Rightarrow 2 f_2^*f_{2*}(\{x+1,y\})=-2 f_2^*f_{2*}(\{a,b\})-f_2^*f_{2*}\left(\left\{a,\frac{y^2}{x^4}\right\}\right)\\
	&\qquad\qquad\qquad\qquad\qquad-f_2^*f_{2*}\left(\left\{\frac{(x+1)^2}{x},b\right\}\right)\\
	&\Rightarrow \int_{\gamma}\eta (f_2^*f_{2*}(M_2))=-\int_{\gamma}\eta (f_2^*f_{2*}(\{a,b\}))-\frac{1}{2}\int_{\gamma}\eta \left(f_2^*f_{2*}\left(\left\{a,\frac{y^2}{x^4}\right\}\right)\right)\\
	&\qquad\qquad\qquad\qquad-\frac{1}{2}\int_{\gamma}\eta \left(f_2^*f_{2*}\left(\left\{\frac{(x+1)^2}{x},b\right\}\right)\right)\\
	&\Rightarrow \int_{f_2(\gamma)}\eta(f_{2*}(M_2))=-\int_{f_2(\gamma)}\eta(f_{2*}(\{a,b\}))-\frac{1}{2}\int_{f_2(\gamma)}\eta\left(f_{2*}\left(\left\{a,\frac{y^2}{x^4}\right\}\right)\right)\\
	&\qquad\qquad\qquad\qquad\qquad-\frac{1}{2}\int_{f_2(\gamma)}\eta\left(f_{2*}\left(\left\{\frac{(x+1)^2}{x},b\right\}\right)\right)
	\end{align*}
\end{proof}

We can find the following functions satisfying Lemma \ref{pro:ab}
\begin{align*}
	a&=\frac{2(1+Z)}{3+Z}\\
	&=\frac{2(X-k+4)}{X-3k+12},\\
	b&=\frac{(Z-1)^3}{(Z+3)(-Z^2+(2k-6)Z+2W-2k-1)}\\	&=-\frac{(X-4+k)^3}{18(X-3k+12)\left(\frac{1}{2}X^2+(k^2-7k+12)X+k^3-\frac{15}{2}k^2+12k+8+(k-4)Y\right)}.\\	\end{align*}
The functions $\frac{y^2}{x^4}$ and $\frac{(x+1)^2}{x}$ are also invariant under $\sigma_2$, so we can also see them as functions on $D_{2,k}$ and further as functions on $E_k$ which gives
\begin{align*}
\frac{y^2}{x^4}
&=\frac{((2k+1)X_1^4+(-2k+6)X_1^2-2X_1Y_1+1)^2}{(X_1^2-1)^4},\\
&=\frac{(2kZ-Z^2-2k+2W-6Z-1)^2}{(Z-1)^4}\\
&=\frac{4(\frac{1}{2}X^2+(k^2-7k+12)X+(k-4)Y+k^3-\frac{15}{2}k^2+12k+8)^2}{(X+k-4)^2},
\end{align*}
and
\begin{align*}
\frac{(x+1)^2}{x}
&=\frac{4X_1^2}{X_1^2-1},\\
&=\frac{4}{1-Z}\\
&=\frac{4(k-4)}{X+k-4}.
\end{align*}
Let $a_1=X-k+4, a_2=X-3k+12, b_1=X+k-4$ and $b_2=\frac{1}{2}X^2+(k^2-7k+12)X+(k-4)Y+k^3-\frac{15}{2}k^2+12k+8$. Then
\begin{align*}
a&=\frac{2a_1}{a_2}\\
b&=-\frac{b_1^3}{18a_2b_2}\\
\frac{y^2}{x^4}&=\frac{4b_2^2}{b_1^4}\\
\frac{(x+1)^2}{x}&=\frac{4(k-4)}{b_1}.
\end{align*}
Let $T = (k-4,2(k-4)\sqrt{k^2-2k})$, $U=(3k-12,4(k-4)\sqrt{k^2-2k-3})$ and $S=(4-k,16-4k)$ which are all points on $E_k$, and $S$ is a 4-torsion point. Then
\begin{align*}
(a_1)&=(T)+(-T)-2O\\
(a_2)&=(U)+(-U)-2O	\\
(b_1)&=(S)+(-S)-2O\\
(b_2)&=4(S)-4O.
\end{align*}
We see $a, b, \frac{y^2}{x^4}, \frac{(x+1)^2}{x}$ as functions on $Z_k$ or $E_k$ when appropriate. By Lemma \ref{pro:ab}, Lemma \ref{lemma:diamond} and using the fact that $f_{2*}(\{c,d\})=\{c,d\sigma_2(d)\}$ if $c$ is invariant under $\sigma_2$, we have
\begin{align}
\begin{split}\label{eqn:M2}
\int_{f_2(\gamma)}\eta(f_{2*}(M_2))
&=-\int_{f_2(\gamma)}\eta(f_{2*}(\{a,b\}))-\frac{1}{2}\int_{f_2(\gamma)}\eta\left(f_{2*}\left(\left\{a,\frac{y^2}{x^4}\right\}\right)\right)\\
&\quad-\frac{1}{2}\int_{f_2(\gamma)}\eta\left(f_{2*}\left(\left\{\frac{(x+1)^2}{x},b\right\}\right)\right)\\
&=-q_12D^{E_k}((a)\diamond(b))+q_1D^{E_k}\left((a)\diamond\left(\frac{y^2}{x^4}\right)\right)\\
&\quad+q_1D^{E_k}\left(\left(\frac{(x+1)^2}{x}\right)\diamond b \right) \\
&=-2q_1D^{E_k}((a_1)\diamond(b_1))+2q_1D^{E_k}((a_1)\diamond(a_2))+3q_1D^{E_k}((a_2)\diamond(b_1))\\
&\quad-q_1D^{E_k}((b_1)\diamond(b_2))\\
&=-8q_1D^{E_k}((S))-4q_1D^{E_k}((2S))\\
&=-8q_1D^{E_k}((S)),
\end{split}
\end{align}
where $[f_2(\gamma)]=q_1[\gamma']$ in $H_1(E_k,\mathbb{Z})^-$, $[\gamma']$ is a generator of $H_1(E_k,\mathbb{Z})^-$.

Now we take $R_k(x_0,y_0)=0$ into account, then
\begin{align*}
x_0
&=\frac{(k-4)x_1-2y_1}{2x_1(x_1-1)}\\
&=\frac{(2k-8)X+(2k^2-8k)-2Y}{(X+k)(X+k-4)}\\
y_0
&=\frac{(k-4)x_1+2y_1}{2x_1(x_1-1)}\\
&=\frac{(2k-8)X+(2k^2-8k)+2Y}{(X+k)(X+k-4)},
\end{align*}
we can easily get
\begin{align*}
(x_0)\diamond(y_0)\sim -8(S).
\end{align*}
By Lemma \ref{lemma:diamond} again, we have
\begin{align}
\begin{split}\label{eqn:rRk}
r_{R_k}(\{x_0,y_0\})[\gamma_{R_k}]
&=r_{E_k}(\{x_0(X,Y),y_0(X,Y)\})[\psi_2\circ\psi_3(\gamma_{R_k})]\\
&=q_2D^{E_k}((x_0)\diamond(y_0))\\
&=-8q_2D^{E_k}((S)),
\end{split}
\end{align}
where $\gamma_{R_k}=\{(x_0,y_0)\in R_k(x_0,y_0)=0:|x_0|=1,|y_0|\geqslant 1\}$ is the Deninger path of $R_k(x_0,y_0)=0$, and $[\gamma_{R_k}]=q_2[\gamma']$ in $H_1(E_k,\mathbb{Z})^-$ as above. So if we get the multiple relationship between $q_1$ and $q_2$, we can establish the connection between $m(R_k)$ and $\int_{f_2(\gamma)}\eta(f_{2*}(M_2))$.

\section{The cycles of integration}

In the above analysis, if we get the relationship between $p_1$ and $p_2$ and between $q_1$ and $q_2$, we can connect the right side of equations \eqref{eqn:M1} and \eqref{eqn:M2} with $m(P_k)$ and $m(R_k)$ respectively. In this section, we will calculate their relationship in different value ranges of $k$, so as to obtain the Theorem \ref{theory}.

 We will first prove $\gamma_{Z_k}, \gamma_{P_k}$ and $\gamma_{R_k}$ are closed. From now on, let us use \textquotedblleft$\pm$\textquotedblright to indicate the sign to be determined.

For reciprocal polynomials $P_k(x_2,y_2)$ and $R_k(x_0,y_0)$, $\gamma_{P_k}$ and $\gamma_{R_k}$ are closed, since $P_k(x_0,y_0)=0$ and $R_k(x_2,y_2)=0$ do not intersect the torus $\mathbb{T}^2$ for both $k\leqslant -1$ and $k\geqslant 17$.

For $Q_k(x,y)=y^2+(x^4+kx^3+2kx^2+kx+1)y+x^4$, we prove a lemma.
\begin{lemma}
	$\gamma_{Z_k}=\{(x,y)\in Z_k:|x+1|=1, |y|\geqslant 1\}$ is a closed path of $Z_k$ for $k\leqslant-1$ and $k\geqslant 17$.
\end{lemma}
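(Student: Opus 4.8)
The plan is to show that the curve $Q_k(x,y)=0$ does not meet the torus $\{|x+1|=1,\ |y|=1\}$, so that along the Deninger path the branch $y_1(x)$ never has modulus exactly $1$ except possibly at the endpoints where $x+1$ traverses the unit circle; then $\gamma_{Z_k}$ is either empty, the whole circle $|x+1|=1$, or a union of closed arcs, and in each case it is a closed cycle. Concretely, parametrize $x+1=e^{i\theta}$, i.e. $x=e^{i\theta}-1$, and study the two roots $y_1(x),y_2(x)$ of $y^2+A_k(x)y+x^4=0$, where $A_k(x)=x^4+kx^3+2kx^2+kx+1$. Since $y_1 y_2=x^4$, we have $|y_1||y_2|=|x|^4=|e^{i\theta}-1|^4$, so on $|x+1|=1$ the product of the moduli is determined; the path $\gamma_{Z_k}$ consists of those $\theta$ with $|y_1(e^{i\theta}-1)|\geqslant 1$. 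It is closed as a $1$-cycle provided no crossing $|y_1|=|y_2|=1$ with $y_1\neq y_2$ occurs, i.e. provided the curve is disjoint from $\{|x+1|=1,|y|=1\}$.

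The key step is therefore to rule out solutions of $Q_k(x,y)=0$ with $|x+1|=1$ and $|y|=1$. First I would write $y=e^{i\varphi}$ and $x=e^{i\theta}-1$ and note that $Q_k(x,y)=0$ forces $e^{2i\varphi}+x^4 = -A_k(x)e^{i\varphi}$, hence $|e^{2i\varphi}+x^4| = |A_k(x)|$. Using $|e^{2i\varphi}+x^4|\leqslant 1+|x|^4$ and, on the other side, bounding $|A_k(x)|=|x^4+kx^3+2kx^2+kx+1|$ from below on the arc $|x+1|=1$ (where $|x|\leqslant 2$ and $x$ is pinned near the segment through $0$ and $-2$), I expect that for $k$ sufficiently negative ($k\leqslant -1$) or sufficiently large ($k\geqslant 17$) the dominant term $|k|\cdot|x^3+2x^2+x| = |k|\cdot|x|\,|x+1|^2 = |k|\,|x|$ makes $|A_k(x)|$ strictly exceed $1+|x|^4$ except at $x=0$ (where $|x|=0$), giving a contradiction. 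The point $x=0$ (i.e. $\theta=0$, $x+1=1$) must be handled separately: there $Q_k(0,y)=y^2+y=y(y+1)$, whose roots $y=0,-1$ are not on $|y|=1$ in a way that produces a transverse crossing — in fact $y=0$ is a root where one branch degenerates, and I would check that near $x=0$ the relevant branch $y_1$ stays on one side of $|y|=1$, so no genuine crossing occurs. A symmetric check at the other extreme $x=-2$ ($\theta=\pi$) is needed: there $Q_k(-2,y)=y^2+(17-8k)y+16$, with real roots whose product is $16$, so one root has modulus $>1$ and the other $<1$, again no crossing.

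Executing this cleanly, I would split into the two cases $k\leqslant -1$ and $k\geqslant 17$ and, on the arc $x=e^{i\theta}-1$, $\theta\in(0,\pi)\cup(\pi,2\pi)$ (i.e. $x\neq 0$), prove the strict inequality $|A_k(x)| > 1+|x|^4$. Writing $A_k(x) = (x^4+1) + k\,x(x+1)^2$ and using $|x+1|=1$, this is $|(x^4+1)+kx|$, and since $|x|=|e^{i\theta}-1|=2|\sin(\theta/2)|\in(0,2]$, a reverse triangle inequality gives $|A_k(x)|\geqslant |k|\,|x| - |x^4+1| \geqslant |k|\,|x| - (|x|^4+1)$. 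Comparing with $1+|x|^4$ reduces the claim to $|k|\,|x| > 2(1+|x|^4)$ for all $|x|\in(0,2]$ — which unfortunately fails for $|x|$ near $0$, so the crude bound is not quite enough and the genuine work is a more careful estimate near $x=0$: there one expands $y_1(x)\approx -A_k(x) \approx -(1+kx)$ for small $x$ (the large root), whose modulus is $|1+kx|$, and one checks $|1+kx|>1$ along the arc for $x=e^{i\theta}-1$ with $\theta$ small and $k\leqslant-1$, while for $k\geqslant 17$ one instead tracks the small root $y_2\approx -x^4/A_k(x)$ and shows the set $\{|y_1|\geqslant 1\}$ opens or closes at $\theta=0$ without the two branches ever meeting on $|y|=1$. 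I expect this local analysis at $x=0$ (and the bookkeeping of which branch is ``$y_1$'' as $\theta\to 0^{\pm}$) to be the main obstacle; away from a neighborhood of $x=0$ the inequality $|A_k(x)|>1+|x|^4$ is robust and the argument is routine, and once no crossing occurs on all of $|x+1|=1$ the path $\gamma_{Z_k}$ is a finite union of closed arcs (possibly empty or the full circle), hence a closed $1$-cycle in $H_1(Z_k,\mathbb{Z})$.
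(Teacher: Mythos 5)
Your overall skeleton --- show that $Q_k(x,y)=0$ misses the torus $\{|x+1|=1,|y|=1\}$ (except at a harmless point) so that $\gamma_{Z_k}$ is a union of closed arcs --- is exactly the paper's strategy. But the way you propose to establish the disjointness has a fatal flaw. Your key inequality $|A_k(x)|>1+|x|^4$ on $|x+1|=1$, $x\neq 0$, is simply false: take $k=-1$ and $x=i-1$ (so $|x+1|=1$); then $x^4=-4$, $x(x+1)^2=1-i$, hence $A_{-1}(x)=(x^4+1)-x(x+1)^2=-4+i$, so $|A_{-1}(x)|=\sqrt{17}<5=1+|x|^4$. The bound $|A_k(x)|\leqslant 1+|x|^4$ is only a \emph{necessary} condition for a root on $|y|=1$ (writing $y^2+A_ky+x^4=0$ as $A_k=-(y+x^4\bar y)$, the right side traces an ellipse, not the full annulus $\bigl|1-|x|^4\bigr|\leqslant|\cdot|\leqslant 1+|x|^4$), so a modulus-only estimate cannot decide the question; you must also track arguments. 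This is not just a difficulty "near $x=0$": for $k=-1$ your reverse-triangle bound $|k||x|>2(1+|x|^4)$ fails on the entire circle since $|k||x|\leqslant 2\leqslant 2(1+|x|^4)$. There is also an arithmetic slip at $x=-2$: $A_k(-2)=16-8k+8k-2k+1=17-2k$, not $17-8k$, and with the correct value $Q_{17}(-2,y)=y^2-17y+16=(y-1)(y-16)$, so at $k=17$ the curve \emph{does} meet the torus at $(-2,1)$; your claimed disjointness is false at the boundary case and an extra argument (the paper uses strictness for $k>17$ plus continuity at $k=17$) is unavoidable.

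The paper circumvents all of this by encoding the simultaneous conditions $Q_k(x-1,y)=0$ and $\overline{Q_k(x-1,y)}=0$ on the torus as $M(x,y)=Q_k(x-1,y)$ and $M^*(x,y)=x^4y^2Q_k(1/x-1,1/y)$ and computing $\mathrm{Res}_y(M,M^*)$, which factors as a reciprocal sextic in $x$ times $(x-1)^2(x^2-x+1)^2x^2k$. Each factor is then analyzed exactly: $x=1$ gives only $y=-1$; the factor $x^2-x+1$ leads to a quartic in $y$ handled via $u=y+1/y\in[-2,2]$; and the sextic is shown nonvanishing on $|x|=1$ via $s=x+1/x\in[-2,2]$ and the linearity of $\mu(k,s)$ in $k$. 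If you want to salvage your approach you would need to replace the modulus inequality by the full elimination of $y$ (which is precisely what the resultant does), or work directly with the ellipse condition $-A_k(x)\in\{y+x^4\bar y:|y|=1\}$; as written, the proposal does not close.
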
	
\begin{proof}
	 Let $Z_k':Q(x-1,y)=0$, then $\gamma_{Z_k}$ being a closed path is equivalent to $\gamma_{Z_k}'=\{(x,y)\in Z_k':|x|=1,|y|\geqslant 1\}$ being a closed path.
	
	 We want to find the intersection between $Q_k(x-1,y)=0$ and the torus $|x|=|y|=1$. Assume such intersection exists, then we also have $Q_k(1/x-1,1/y)=0$. Let $M(x,y)=Q_k(x-1,y)$ and $M^{*}(x,y)=x^4y^2Q_k(\frac{1}{x}-1,\frac{1}{y})$. Then
	 \begin{align*}
	 &Res_y(M(x,y),M^{*}(x,y))=\\
	 &((k+5)x^6+(-k-35)x^5+(-7k+100)x^4+(15k-145)x^3+(-7k+100)x^2+(-k-35)x+k+5)\times\\
	 &(x-1)^2(x^2-x+1)^2x^2k = 0.
	 \end{align*}
	
If $x-1=0$, i.e., $x=1$, then $y=-1$.

If $x^2-x+1=0$, we can find $f(x,y)=y^2 - yxk + yx + x - 1=0$. We have
	 \begin{align*}
	 f(e^{\frac{\pi i}{3}},y)f(e^{-\frac{\pi i}{3}},y)
	 &=y^4+(-k+1)y^3+(k^2-2k)y^2+(-k+1)y+1\\
	 &=y^2g(u),
	 \end{align*}
	 where $u=y+\frac{1}{y}$. It is easy to see $g(u)=u^2+(-k+1)u+k^2-2k-2>0$ for $u\in [-2,2]$ and $k \leqslant -1$ or $k\geqslant 17$.
Hence we can easily conclude that $f(e^{\frac{\pi i}{3}},y)$ and $f(e^{-\frac{\pi i}{3}},y)$ have no solution on $|y|=1$.

Next we prove $(k+5)x^6+(-k-35)x^5+(-7k+100)x^4+(15k-145)x^3+(-7k+100)x^2+(-k-35)x+k+5\neq 0$ for $|x|=1$. Let $s=x+\frac{1}{x}$. Then it is equivalent to show $\mu(k,s)=(s^3-s^2-10s+17)k+5s^3-35s^2+85s-75 \neq 0$ where $s\in [-2,2]$. It is easy to see $s^3-s^2-10s+17>0$. Then we get
	 \begin{align*}
	 \begin{cases}
	 \mu(k,s)\leqslant\mu(-1,s)=4s^3-34s^2+95s-92<0\quad \text{for }k\leqslant-1, \\
	 \mu(k,s)>\mu(17,s)=22s^3-52s^2-85s+214\geqslant 0\quad \text{for }k>17.
	 \end{cases}
	 \end{align*}
	
	 In summary, we see $(1,-1)$ is only intersection point of $\gamma_{Z_k}'$ and $\mathbb{T}^2$ for $k\leqslant-1$ and $k> 17$. Hence $\gamma_{Z_k}'$ is closed for $k\leqslant-1$ and $k> 17$. Then by continuity, $\gamma_{Z_k}'$ is also closed for $k=17$.
\end{proof}

\subsection{The quotient map $f_1$}
First consider the case $k \geqslant 17$. Let $\omega_1$ be the holomorphic differential on $U_k$
\begin{equation*}
\omega_1=\frac{dX_0}{2Y_0+(k-2)X_0+k}.
\end{equation*}
Then we have
\begin{align*}
\int_{f_1(\gamma_{Z_k})}\omega_1
&=\int_{\gamma_{Z_k}}f_1^{*}\omega_1\\
&=\pm\int_{\gamma_{Z_k}}\frac{(x+1)^2}{\sqrt{(x^4+kx^3+2kx^2+kx+1)^2-4x^4}}dx\\
&=\pm\frac{1}{2}\int_{\iota_{1}(\gamma_{Z_k})}\frac{idu}{\sqrt{(1-u)(u+\frac{k-2}{2})(u^2+\frac{k}{2}u+\frac{k}{2})}}.
\end{align*}
where $\iota_1$ is the substitution $u=\frac{1}{2}\left(x+\frac{1}{x}\right)$.

We can parameterize the loop $\gamma_{Z_k}$ by letting $x=e^{2\pi it}-1 (t \in [0,1])$ which gives
\begin{align*}
u(t)
&=\frac{1}{2}\left(x(t)+\frac{1}{x(t)}\right)\\
&=\frac{1}{2}\left(e^{2\pi it}-1+\frac{1}{e^{2\pi it}-1}\right)\\
&=\cos(\pi t)^2-\frac{5}{4}+\frac{3\cos(\pi t)-4\cos(\pi t)^3}{4\sin(\pi t)}i.
\end{align*}
The image of $u(t)$ has only two intersections with the real axis at $-\frac{5}{4}$ which corresponds to $t=\frac{1}{2}$ and $-\frac{1}{2}$ which corresponds to $t=\frac{1}{6}$, $\frac{5}{6}$.

Let $e_1=1$, $e_2=1-\frac{k}{2}$, $e_3=-\frac{1}{4}k+\frac{1}{4}\sqrt{k^2-8k}$ and $e_4=-\frac{1}{4}k-\frac{1}{4}\sqrt{k^2-8k}$. Then for $k\geqslant17$
\begin{equation*}
e_2<e_4<-\frac{5}{4}<e_3<-\frac{1}{2}<1.
\end{equation*}
We can see $u(t)$ is a path looping around $e_3$ which lifts to a homology class equivalent to the lift of the line segment connecting $e_1$ and $e_3$ (see the picture below).
\begin{tikzpicture}
\draw (-1,0)..controls+(down:3cm) and +(right:0cm)..(2,3);
\draw (-1,0)..controls+(up:3cm) and +(right:0cm)..(2,-3);
\draw (-4,0)--(5,0);
\filldraw (-0.5,0) circle (2pt) node[anchor=north]{$e_3$};
\filldraw (-1,0) circle (2pt) node[anchor=north]{$-\frac{5}{4}$};
\filldraw (-2,0) circle (2pt) node[anchor=north]{$e_4$};
\filldraw (-3,0) circle (2pt) node[anchor=north]{$e_2$};
\filldraw (0.2,0) circle (2pt) node[anchor=north]{$-\frac{1}{2}$};
\filldraw (3,0) circle (2pt) node[anchor=north]{$e_1$};
\end{tikzpicture}

Then we get
\begin{align}
\begin{split}\label{eqn:Qkw1}
\int_{f_1(\gamma_{Z_k})}\omega_1
&=\frac{1}{2}\int_{\iota_{1*}(\gamma_{Z_k})}\frac{idu}{\sqrt{(1-u)(u+\frac{k-2}{2})(u^2+\frac{k}{2}u+\frac{k}{2})}}\\
&=\pm\int_{e_3}^{e_1}\frac{idt}{\sqrt{(1-u)(u+\frac{k-2}{2})(u^2+\frac{k}{2}u+\frac{k}{2})}}\\
&=\pm\int_{\frac{k-4+\sqrt{k(k-8)}}{2k}}^{1}\frac{2ids}{\sqrt{s(1-s)(k^2s^2+k(4-k)s+4)}}\quad\left(u=\frac{ks-k+2}{2}\right)\\
&=\pm\int_{\frac{1}{8}k^2-\frac{1}{2}k-1-\frac{1}{8}k\sqrt{k(k-8)}}^{0}\frac{idv}{\sqrt{v(v^2-(\frac{k^2}{4}-k-2)v+k+1)}}\quad\left(s=\frac{1}{v+1}\right).
\end{split}
\end{align}

On the other hand, we have
\begin{align*}
\omega_1
&=\frac{dX_0}{2Y_0+(k-2)X_0+k}\\
&=\frac{y_2dx_2}{(x_2+1)(y_2^2-x_2)},
\end{align*}
and
\begin{align}
\begin{split}\label{eqn:Pkw1}
\int_{\gamma_{P_k}}\omega_1
&=\int_{\gamma_{P_k}}\frac{y_2dx_2}{(x_2+1)(y_2^2-x_2)}\\	
&=\pm\int_{\gamma_{P_k}}\frac{dx_2}{\sqrt{(x_2^4+1)-2k(x_2^3+x_2)+(k^2-4k-2)x_2^2}}\\
&=\pm\int_{0}^{1}\frac{2\pi idt}{\sqrt{(k-4\cos(\pi t)^2)^2-16\cos(\pi t)^2}}\quad\left(x_2=e^{2\pi it}\right)\\
&=\pm\int_{0}^{\frac{1}{2}}\frac{4\pi idt}{\sqrt{(k-4\cos(\pi t)^2)^2-16\cos(\pi t)^2}}\\
&=\pm\int_{0}^{1}\frac{2idu_1}{\sqrt{u_1(1-u_1)((k-4u_1)^2-16u_1)}}\quad\left(u_1=\cos(\pi t)^2\right)\\
&=\pm\int_{0}^{\infty}\frac{idw}{\sqrt{w(w^2+2(\frac{k^2}{4}-k-2)w+\frac{k^3}{16}(k-8))}}\quad\left(u_1=\frac{1}{1+\frac{4w}{k^2}}\right)\\
&=\pm\int_{\frac{1}{8}k^2-\frac{1}{2}k-1+\frac{1}{8}k\sqrt{k(k-8)}}^{\infty}\frac{idv}{\sqrt{v(v^2-(\frac{k^2}{4}-k-2)v+k+1)}}\\
&\qquad\qquad\left(w=v-\left(\frac{k^2}{4}-k-2\right)+\frac{k+1}{v}\right).
\end{split}
\end{align}
So comparing equations \eqref{eqn:Qkw1} and \eqref{eqn:Pkw1} we get
\begin{align*}
\int_{f_1(\gamma_{Z_k})}\omega_1=\pm\int_{\gamma_{P_k}}\omega_1,
\end{align*}
then
\begin{align}\label{eqn:p17}
p_1=\pm p_2
\end{align}
for $k\geqslant17$.

For $k<-1$, we have
\begin{equation*}
-\frac{5}{4}<e_4<-\frac{1}{2}<e_1<e_3<e_2.
\end{equation*}
We can see $u(t)$ as a path looping around $e_4$ twice which lifts to a trivial homology class (see the picture below).

\begin{tikzpicture}
\draw (-6,0)--(6,0);
\draw (-3,0)..controls+(down:3cm) and +(right:0cm)..(2,3);
\draw (-3,0)..controls+(down:3cm) and +(right:0cm)..(2,3);
\filldraw (-3,0) circle (2pt) node[anchor=north]{$-\frac{5}{4}$};
\filldraw (-2,0) circle (2pt) node[anchor=north]{$e_4$};
\filldraw (-0.9,0) circle (2pt) node[anchor=north]{$-\frac{1}{2}$};
\draw (-3,0)..controls+(up:3cm) and +(right:0cm)..(2,-3);
\filldraw (3,0) circle (2pt) node[anchor=north]{$e_1$};
\filldraw (4,0) circle (2pt) node[anchor=north]{$e_3$};
\filldraw (5,0) circle (2pt) node[anchor=north]{$e_2$};
\end{tikzpicture}

We get
\begin{equation*}
\int_{f_1(\gamma_{Z_k})}\omega_1=0.
\end{equation*}
Hence we conclude that $p_1=0$ for $k<-1$. Then by continuity, we get
\begin{equation}\label{p-1}
p_1=0\quad \text{for}\quad k\leqslant-1.
\end{equation}

\subsection{The quotient map $f_2$}

Consider the holomorphic differential
\begin{equation*}
\omega_2=\frac{dX}{2Y} = \frac{dx_1}{4y_1} = \frac{y_0}{2(y_0^2-1)x_0}dx_0,
\end{equation*}
we have
\begin{align}
\begin{split}\label{eqn:Rkw2}
\int_{\gamma_{R_k}}\omega_2
&=\int_{\gamma_{R_k}}\frac{y_0}{2(y_0^2-1)x_0}dx_0\\
&=\pm\int_{\gamma_{R_k}}\frac{dx_0}{2\sqrt{(x_0^2+1+(k-4)x_0)^2-4x_0^2}}\\
&=\pm\int_{0}^{1}\frac{2\pi idt}{2\sqrt{(2\cos(2\pi t)+k-4)^2-4}}\quad(x_0=e^{2\pi i t})\\
&=\pm\pi i\int_{0}^{1}\frac{1}{\sqrt{(4(\cos(\pi t))^2+k-4)(4(\cos(\pi t))^2+k-8)}}dt\\
&=\pm2\pi i\int_{0}^{\frac{1}{2}}\frac{1}{\sqrt{(4(\cos(\pi t))^2+k-4)(4(\cos(\pi t))^2+k-8)}}dt\\
&=\pm2 \int_{0}^{1}\frac{ds}{\sqrt{(s^2-1)(4s^2+k-4)(4s^2+k-8)}}\quad(\cos(\pi t)=s)\\
&=\pm\int_{0}^{1}\frac{du}{\sqrt{u(u-1)(4u+k-4)(4u+k-8)}}\quad(s^2=u)\\
&=\pm\frac{1}{4}\int_{\frac{k}{8}-1}^{\frac{k}{8}}\frac{dw}{\sqrt{(w-\frac{k-8}{8})(w+\frac{k-8}{8})(w-\frac{k}{8})(w+\frac{k}{8})}}\quad\left(u=w-\frac{k}{8}+1\right)\\
&=\pm\int_{-1}^{-\infty}\frac{d\alpha}{\sqrt{(\alpha+1)((k-4)\alpha+4)(k\alpha+4)}}\quad\left(\alpha=\frac{1}{w-\frac{k}{8}}\right)\\
&=\pm\frac{1}{4}\int_{\frac{k}{2}}^{-\infty}\frac{d\beta}{\sqrt{(\beta-\frac{k}{2})(\beta-\frac{k^2}{16})(\beta-\frac{k^2+16}{16})}}\quad\left(\alpha=\frac{16\beta}{k^2-4k}-\frac{k+4}{k-4}\right).
\end{split}
\end{align}
We also have
\begin{align}
\begin{split}\label{eqn:Qkw2}
\int_{f_2(\gamma_{Z_k})}\omega_2
&=\int_{\gamma_{Z_k}}f_2^{*}\omega_2
=\int_{\gamma_{Z_k}}\frac{y(x^2-1)}{2(y^2-x^4)}dx\\
&=\pm\int_{\gamma_{Z_k}}\frac{x^2-1}{2\sqrt{(x^4+kx^3+2kx^2+kx+1)^2-4x^4}}dx \\
&=\pm\frac{1}{4}\int_{\iota_1(\gamma_{Z_k})}\frac{du}{\sqrt{(u+1)(u+\frac{k-2}{2})(u^2+\frac{k}{2}u+\frac{k}{2})}}\\
&=\pm\frac{1}{4}\int_{(\iota_2\circ\iota_1)(\gamma_{Z_k})}\frac{d\tau}{\sqrt{(\tau^2-\frac{(k-4)^2}{16})(\tau^2-\frac{k^2-8k}{16})}}\\
&=\pm\frac{1}{8}\int_{(\iota_3\circ\iota_2\circ\iota_1)(\gamma_{Z_k})}\frac{ds}{\sqrt{(s-\frac{k}{2})(s-\frac{k^2}{16})(s-\frac{k^2+16}{16})}}.
\end{split}
\end{align}
where $\iota_1,\iota_2,\iota_3$ are the substitutions $u=\frac{1}{2}(x+\frac{1}{x})$, $\tau=u+\frac{k}{4}$, $s=\tau^2+\frac{k}{2}$ respectively.

Now let us analyze the integral path $(\iota_3\circ\iota_2\circ\iota_1)(\gamma_{Z_k})$. We parameterize the loop $\gamma_{Z_k}$ by $x=e^{2\pi it}-1$, then
\begin{align*}
s(t)
&=\left(\frac{1}{2}\left(x(t)+\frac{1}{x(t)}\right)+\frac{k}{4}\right)^2+\frac{k}{2}\\
&=-\frac{1}{16}\frac{32\cos(\pi t)^6+(8k-80)\cos(\pi t)^4+(k^2-10k+74)\cos(\pi t)^2-k^2+2k-25}{\sin(\pi t)^2}\\
&\quad -\frac{1}{8}\frac{\cos(\pi t)(4\cos(\pi t)^2-3)(4\cos(\pi t)^2+k-5)}{\sin(\pi t)}i.
\end{align*}
The image of $s(t)$ has only two intersections with the real axis, i.e., $\frac{(k+2)^2}{16}$ which corresponds to $t=\frac{1}{6}$, $\frac{5}{6}$ and $\frac{k^2-2k+25}{16}$ which corresponds to $t=\frac{1}{2}$.
Let $e_1'=\frac{k}{2}$, $e_2'=\frac{k^2}{16}$, $e_3'=\frac{k^2}{16}+1$.

For $k\leqslant-1$, we have
\begin{align*}
e_1'<\frac{(k+2)^2}{16} <e_2'<e_3'<\frac{k^2-2k+25}{16},
\end{align*}
We can see $s(t)$ is a path looping around $e_2'$ and $e_3'$ twice which lifts to a homology class which
is equivalent to twice of the lift of the line segment connecting $e_2'$ and $e_3'$ (see the picture below).

\begin{tikzpicture}
\draw (-6,0)--(6,0);
\draw (4,0)..controls+(down:4cm) and +(right:0cm)..(-3,3);
\draw (4,0)..controls+(up:4cm) and +(right:0cm)..(-3,-3);
\filldraw (4,0) circle (2pt) node[anchor=north]{$\frac{k^2-2k+25}{16}$};
\filldraw (2.5,0) circle (2pt) node[anchor=north]{$e_3'$};
\filldraw (1.5,0) circle (2pt) node[anchor=north]{$e_2'$};
\filldraw (0.5,0) circle (2pt) node[anchor=north]{$\frac{(k+2)^2}{16}$};
\filldraw (-2,0) circle (2pt) node[anchor=north]{$e_1'$};
\end{tikzpicture}

So we have
\begin{align}
\begin{split}\label{eqn:rQk}
&\frac{1}{8}\int_{(\iota_3\circ\iota_2\circ\iota_1)_{*}(\gamma_{Z_k})}\frac{ds}{\sqrt{(s-\frac{k}{2})(s-\frac{k^2}{16})(s-\frac{k^2+16}{16})}}\\
=&\pm\frac{1}{2}\int_{\frac{k}{2}}^{-\infty}\frac{ds}{\sqrt{(s-\frac{k}{2})(s-\frac{k^2}{16})(s-\frac{k^2+16}{16})}}.
\end{split}
\end{align}

Hence by \eqref{eqn:Rkw2}, \eqref{eqn:Qkw2} and \eqref{eqn:rQk}, we conclude that
\begin{align}\label{eqn:q-1}
q_1=\pm2q_2
\end{align}
for $k\leqslant-1$.

For $k\geqslant 17$, we have
\begin{align*}
e_1'< \frac{k^2-2k+25}{16}<e_2'<e_3'<\frac{(k+2)^2}{16}.
\end{align*}
We can see $s(t)$ is a path looping around $e_2'$ and $e_3'$ which lifts to a homology class which
is equivalent to the lift of the line segment connecting $e_2'$ and $e_3'$ (see the picture below).

\begin{tikzpicture}
\draw (-1,0)..controls+(down:4cm) and +(right:15cm)..(-3,3);
\draw (-1,0)..controls+(up:4cm) and +(right:15cm)..(-3,-3);
\draw (-6,0)--(6,0);
\filldraw (-1,0) circle (2pt) node[anchor=north]{$\frac{k^2-2k+25}{16}$};
\filldraw (0.5,0) circle (2pt) node[anchor=north]{$e_2'$};
\filldraw (1.5,0) circle (2pt) node[anchor=north]{$e_3'$};
\filldraw (3.6,0) circle (2pt) node[anchor=north]{$\frac{(k+2)^2}{16}$};
\filldraw (-3,0) circle (2pt) node[anchor=north]{$e_1'$};
\end{tikzpicture}

So we have
\begin{align}
\begin{split}\label{eqn:rQk2}
&\frac{1}{8}\int_{(\iota_3\circ\iota_2\circ\iota_1)_{*}(\gamma_{Z_k})}\frac{ds}{\sqrt{(s-\frac{k}{2})(s-\frac{k^2}{16})(s-\frac{k^2+16}{16})}}\\
=&\pm\frac{1}{4}\int_{\frac{k}{2}}^{-\infty}\frac{ds}{\sqrt{(s-\frac{k}{2})(s-\frac{k^2}{16})(s-\frac{k^2+16}{16})}}.
\end{split}
\end{align}
Hence by \eqref{eqn:Rkw2}, \eqref{eqn:Qkw2} and \eqref{eqn:rQk2}, we conclude that
\begin{align}\label{eqn:q17}
q_1=\pm q_2
\end{align}
for $k\geqslant17$.

\section{Proof of Theorem 1.1}

\begin{proof}
Let us take $\gamma=\gamma_{Z_k}$ in \eqref{eqn:M12}. Then we have
	\begin{align*}
	\frac{1}{2\pi}\int_{\gamma}\eta(M_2)
	&=\frac{1}{8\pi}\int_{f_1(\gamma)}\eta(f_{1*}(M_1))+\frac{1}{4\pi}\int_{f_2(\gamma)}\eta(f_{2*}(M_2))\\
	&=\frac{1}{4\pi}p_1D^{U_k}(-6(P)-6(2P))-\frac{2}{\pi}q_1D^{E_k}((S))\qquad(\text{by \eqref{eqn:M2},\eqref{eqn:M1}})\\
	&=\begin{cases}
	\pm\frac{1}{2\pi}r_{\gamma_{R_k}}(\{x_0,y_0\})[\gamma_{R_k}], &k\leqslant-1\\
\pm\frac{1}{4\pi}r_{P_k}(\{x_2,y_2\})[\gamma_{P_k}]\pm\frac{1}{4\pi}r_{\gamma_{R_k}(\{x_0,y_0\})[\gamma_{R_k}]},\quad &k\geqslant 17
\end{cases}
\end{align*}
where the case $k\leqslant-1$ is derived by \eqref{eqn:rPk}, \eqref{eqn:rRk},  \eqref{p-1}, \eqref{eqn:q-1}
and the case $k\geqslant 17$ is derived by
\eqref{eqn:rPk}, \eqref{eqn:rRk}, \eqref{eqn:p17}, \eqref{eqn:q17}.

Hence by \eqref{eqn:Den}, we have
\begin{align}\label{eqn:shift}
m(Q_k(x-1,y))=\left|\frac{1}{2\pi}\int_{\gamma}\eta(M_2)\right|=
	\begin{cases}
	m(R_k),&k\leqslant-1\\
	\frac{1}{2}|m(P_k)\pm m(R_k)|,\quad &k\geqslant 17.
	\end{cases}
	\end{align}
It is easy to see the Mahler measures are equivalent to $\log|k|$ as $k$ tends to infinity. Combine this fact with $\eqref{eqn:shift}$ and continuity, we prove Theorem \ref{theory}.
\end{proof}

By the evaluation of $m(R_{-1}), m(R_{-4}), m(R_{-8})$ and $m(R_{-12})$ in \cite{Br2016}, \cite{La}, \cite{LR}, \cite{RZ} and \cite{RZ14}, we have the following corollary of Theorem \ref{theory}.
\begin{corollary}
Let $E_m/\mathbb{Q}$ be an elliptic curve with conductor $m$. Then we have
\begin{align*}
m(Q_{-1}(x-1,y)) &= 6L'(E_{15},0),\\
m(Q_{-4}(x-1,y)) &= 4L'(E_{24},0),\\
m(Q_{-8}(x-1,y)) &= 2L'(E_{48},0),\\
m(Q_{-12}(x-1,y)) &= 11L'(E_{15},0).
\end{align*}
\end{corollary}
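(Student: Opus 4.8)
The plan is essentially to read the corollary off from Theorem \ref{theory} together with known Mahler-measure evaluations, so there is very little to do. First I would observe that the four parameter values $k=-1,-4,-8,-12$ all satisfy $k\leqslant -1$, so the first branch of Theorem \ref{theory} applies verbatim and gives $m(Q_k(x-1,y))=m(R_k)$ for each of them. This already disposes of all the genus-$2$ machinery and reduces the corollary to a statement about the symmetric family $R_k$ alone.

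Next I would identify the relevant members of $R_k$ with curves whose Mahler measures have already been computed. Recall that $R_k$ is the locus of $R_k(x_0,y_0)=x_0+x_0^{-1}+y_0+y_0^{-1}+(k-4)$; applying $m(P)=m(-P)$ together with the measure-preserving torus automorphism $(x_0,y_0)\mapsto(-x_0,-y_0)$, which negates $x_0+x_0^{-1}+y_0+y_0^{-1}+c$, one gets $m(R_k)=m\!\left(x+x^{-1}+y+y^{-1}+(4-k)\right)$. Hence $m(R_{-1}),m(R_{-4}),m(R_{-8}),m(R_{-12})$ are the Mahler measures of $x+x^{-1}+y+y^{-1}+c$ for $c=5,8,12,16$; for each such $c$ we have $|c|>4$, so the curve is smooth and disjoint from $\mathbb{T}^2$, and its Mahler measure is the regulator-type period evaluated in the cited literature. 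I would then simply quote the identities $m(R_{-1})=6L'(E_{15},0)$, $m(R_{-4})=4L'(E_{24},0)$, $m(R_{-8})=2L'(E_{48},0)$ and $m(R_{-12})=11L'(E_{15},0)$ from \cite{Br2016}, \cite{La}, \cite{LR}, \cite{RZ} and \cite{RZ14}, and combine them with the first step to conclude.

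I do not expect a genuine obstacle here, since the mathematical content is entirely contained in Theorem \ref{theory} and in the cited Mahler-measure evaluations. The only point that needs a little care is the bookkeeping in the identification $m(R_k)=m\!\left(x+x^{-1}+y+y^{-1}+(4-k)\right)$: one must track both the shift $c=k-4$ built into the definition of $R_k$ and the reflection $c\mapsto -c$ coming from $m(P)=m(-P)$, so that $R_{-1},R_{-4},R_{-8},R_{-12}$ are matched with the entries $c=5,8,12,16$ rather than with $c=-5,-8,-12,-16$ or with an off-by-four shift. It is also worth remarking that $L'(E_m,0)$ depends only on the isogeny class of $E_m$, which is why conductor $15$ may legitimately occur for two of the four values; the resulting ratio $m(R_{-12})/m(R_{-1})=11/6$ is an instance of a Boyd-type relation within that isogeny class.
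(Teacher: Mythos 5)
Your proposal is correct and is essentially identical to the paper's own (one-line) justification: the paper simply applies the $k\leqslant-1$ branch of Theorem \ref{theory} to get $m(Q_k(x-1,y))=m(R_k)$ and then quotes the evaluations of $m(R_{-1}),m(R_{-4}),m(R_{-8}),m(R_{-12})$ from the cited references. Your extra care with the normalization $m(R_k)=m\left(x+x^{-1}+y+y^{-1}+(4-k)\right)$, matching $k=-1,-4,-8,-12$ to $c=5,8,12,16$, is accurate and slightly more explicit than the paper.
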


\section{Conclusion}
In this article, we prove a Mahler measure identity involving the shifted Mahler measure of Boyd's family.
We expect the above methods applicable to prove more Mahler measure identities in \cite{Liu}. We will try to develop a universal algorithm to deal with this kind of problem in the future work.

Another possible direction is to deal with the the Mahler measure of polynomials defining curve with non-abelian automorphism groups using this method.

\section{Acknowledgements}
The authors would like to thank Fracois Brunault, Maltilde Lalin, Riccardo Pengo, Haixu Wang and Wadim Zudilin for very helpful conversations and/or correspondences.
The first and third authors were supported by the National Natural Science Foundation of China (Grant No.\,11771422).
The second author was supported by the General Program Class A of Shenzhen Stable Support Plan (Grant No.\,20200812135418001) and the National Natural Science Foundation of China (Grant No.\,11801345).

\end{document}